\documentclass[a4paper,11pt]{amsart}

\usepackage[T1]{fontenc}

\usepackage{microtype}
\usepackage{lmodern}

\usepackage[foot]{amsaddr}

\usepackage{amssymb}
\usepackage{amsmath}
\usepackage{amsthm}
\usepackage[USenglish]{babel}

\usepackage{constants}
\usepackage{paralist}

\usepackage{esint}
\usepackage[final]{hyperref}

\usepackage[abbrev,backrefs]{amsrefs}

\usepackage[nameinlink%
,capitalize%
]{cleveref}

\usepackage[latin1]{inputenc}

\usepackage[abbrev]{amsrefs}

\usepackage{geometry}

\allowdisplaybreaks

\newtheorem{theorem}{Theorem}[section]
\newtheorem{lemma}[theorem]{Lemma}
\newtheorem{proposition}[theorem]{Proposition}

\theoremstyle{plain}
\newtheorem{claim}{Claim}
\newcommand{\resetclaim}{\setcounter{claim}{0}}
\newenvironment{proofclaim}
  [1]
  [Proof of the claim]
  {\begin{proof}[#1]}
  {\end{proof}}

\theoremstyle{definition}
\newtheorem{definition}[theorem]{Definition}

\theoremstyle{remark}
\newtheorem{remark}[theorem]{Remark}

\numberwithin{equation}{section}

\newcommand{\abs}[1]{\lvert #1 \rvert}
\newcommand{\bigabs}[1]{\bigl\lvert #1 \bigr\rvert}

\newcommand{\biggabs}[1]{\biggl\lvert #1 \biggr\rvert}

\newcommand{\norm}[1]{\lVert #1 \rVert}

\newcommand{\st}{\;\vert\;}
\newcommand{\dif}{\,\mathrm{d}}
\newcommand{\N}{\mathbb N}
\newcommand{\R}{\mathbb R}

\allowdisplaybreaks

\begin{document}

\title[Choquard equation with sign-changing potential]{
Groundstates of the Choquard equations with a sign-changing self-interaction potential}

\author{Luca Battaglia}
\address{Universit\`a degli Studi Roma Tre\\
Dipartimento di Matematica e Fisica\\
Largo S. Leonardo Murialdo 1\\
00146 Rome\\
Italy}
\email{lbattaglia@mat.uniroma3.it}

\author{Jean Van Schaftingen}
\address{Universit\'e catholique de Louvain\\ 
Institut de Recherche en Math\'ematique et Physique\\
Chemin du Cyclotron 2 bte L7.01.01\\
1348 Louvain-la-Neuve\\
Belgium}
\email{Jean.VanSchaftingen@uclouvain.be}

\thanks{This work was supported by the Projet de Recherche (Fonds de la Recherche Scientifique--FNRS) T.1110.14 ``Existence and asymptotic behavior of solutions to systems of semilinear elliptic partial differential equations''.}
\date{}

\subjclass{35J47 (35C08, 35J50, 35Q40, 35Q55)}
\keywords{Schr\"odinger--Newton equation; Hartree equation;  logarithmic potential; variational methods; relaxation.}
\begin{abstract}
We consider a nonlinear Choquard equation
$$
  -\Delta u+u= (V * |u|^p )|u|^{p-2}u \qquad \text{in }\mathbb{R}^N,
$$
when the self-interaction potential $V$ is unbounded from below.
Under some assumptions on \(V\) and on \(p\), covering \(p =2\) and \(V\) being the one- or two-dimensional Newton kernel, we prove the existence of a nontrivial groundstate solution $u\in H^1 (\mathbb{R}^N)\setminus\{0\}$ 
by solving a relaxed problem by a constrained minimization and then proving the convergence of the relaxed solutions to a groundstate of the original equation.
\end{abstract}

\maketitle

\section{Introduction}

We are interested in the nonlinear Choquard equation``
\begin{equation}\label{equationChoquard}
-\Delta u+u=\bigl(V * \abs{u}^p \bigr)\,\abs{u}^{p-2}u;\tag{$\mathcal{C}$}
\end{equation}
in the Euclidean space \(\R^N\) with \(N \in \mathbb{N} = \{1, 2, \dotsc\}\), where \(p \in [1, +\infty)\) is a given exponent and \(V : \R^N \to \R\) is a given self-interaction potential. 
Solutions to the Choquard equation \eqref{equationChoquard} correspond, \emph{at least formally}, to critical points of the \emph{functional} \(\mathcal{I}\) defined for each function \(u : \R^N \to \R\) by 
\begin{equation}\label{i}
\mathcal{I} (u)=\frac{1}{2}\int_{\R^2}\bigl(\abs{\nabla u}^2+u^2\bigr)-\frac{1}{2p}\int_{\R^2}\bigl(V * \abs{u}^p \bigr)\,\abs{u}^p.
\end{equation}

When \(p = 2\), \(N = 3\) and the self-interaction potential \(V\) is \emph{Newton's kernel}, that is, the fundamental solution of the Laplacian on the space \(\R^3\), the Choquard equation \eqref{equationChoquard} arises in several fields of physics (quantum mechanics \cite{pek}, one-component plasma \cite{lie}, self gravitating matter \cite{mpt}). In the more general setting where \(N \in \N_*\), that the function \(V : \R^N \to \R\) is a \emph{Riesz potential}, that is, for \(x \in \R^N \setminus \{0\}\),
\[
 V (x) = I_\alpha (x) 
  := \frac{\Gamma \bigl(\frac{N-\alpha}{2}\bigr)}
  {\Gamma \bigl(\frac{\alpha}{2}\bigr)\pi^\frac{N}{2} 2^\alpha |x|^{N-\alpha}},
\]
with $\alpha \in (0,N)$ and that \(\frac{N + \alpha}{N} \le \frac{1}{p} \le \frac{N + \alpha}{N - 2}\), the existence of a groundstate solution, minimizing the functional \(\mathcal{I}\) among all solutions and of multiple solutions has been proved \citelist{\cite{lie}\cite{m80}\cite{mv13}\cite{Stuart1980}\cite{ChoquardStubbeVuffray2008}\cite{TodMoroz1999}\cite{Bongers1980}\cite{lio}\cite{Lions1982}}. Similar results have been obtained when the power nonlinearity  $\abs{u}^p$ is replaced by a more general nonlinearity \citelist{\cite{mv15}\cite{bv}}. 
We also refer the interested reader to the recent survey \cite{mv17}.

In \emph{low dimensions} \(N \in \{1, 2\}\), the Newton kernel is not anymore a Riesz potential and is characterized by a \emph{linear} or \emph{logarithmic growth at infinity}. This means that the above results cannot be transferred readily to the case where \(V\) is the one- or two-dimensional Newton kernel and that other ideas and methods are needed. 
Solutions for the Choquard equation with a low-dimensional Newton kernel have been constructed by ordinary differential equation methods \cite{ChoquardStubbeVuffray2008}.

A key difficulty in order to construct solutions variationally 
is that the functional \(\mathcal{I}\) \emph{is not well-defined on the natural Sobolev space} \(H^1 (\R^N)\).
For the two-dimensional logarithmic Newton kernel,   variational methods
have been applied successfully in the framework of the Hilbert space of functions \(u : \R^N \to \R\) such that 
\begin{equation}
\label{eqCondHilbert}
 \int_{\R^2} \bigl( \abs{\nabla u}^2 + \bigl(1 + V^-\bigr) \abs{u}^2 \bigr)< + \infty
\end{equation}
(see \citelist{\cite{ChoquardStubbe2007}
\cite{Stubbe2008}\cite{StubbeVuffray2010}\cite{dw}\cite{Cingolani_Weth}\cite{bcv}}).
A delicate point in this approach is that although the Choquard equation \eqref{equationChoquard} and the associated energy functional \(\mathcal{I}\) are \emph{invariant under translations} of the Euclidean space \(\R^N\), the Hilbert space naturally defined by the quadratic form \eqref{eqCondHilbert} is \emph{not} any more \emph{invariant} under translation. 
At the technical level, this means, for example, that a translated Palais--Smale sequence need not be itself a Palais--Smale sequence.
This nonintrinsic character of the method translates in a rigidity of the results, that cannot be readily generalized to similar classes of potentials.

The goal of the present work is to develop a method invariant under translations that would yield more flexible existence results for the Choquard problem \eqref{equationChoquard}.

Since the main issue in studying variationally \eqref{equationChoquard} is the unboundedness of the negative part of the pontetial \(V^-\), we propose to construct solutions to a relaxed variational problem obtained by truncating the potential \(V\) and then to obtain solutions of the original problem by passing to the limit on the relaxation parameter.

We obtain the following result:

\begin{theorem}\label{theoremMainGroundstate}
Let \(N \in \N\) and \(p \in [2, +\infty)\).
If \(p \ge 2\) and if the function \(V \in \R^N \to \R\) is even and satisfies
\begin{align}
\tag{$V_1$}
\label{V1} 
& V^+ \in L^q (\R^N) 
  \text{ with }q \in [1, +\infty)
  \text{ and }\tfrac{1}{p} (1 - \tfrac{1}{2q}) \ge \tfrac{1}{2} - \tfrac{1}{N},\\
\tag{$V_2$} 
\label{V2} & \sup_{\abs{x - y} \le 1} \abs{V^- (x) - V^- (y)} < + \infty,\\
\tag{$V_3$} 
\label{V3} 
& \text{there exists a function \(\varphi \in C^1_c (\R^N)\) such that}
\int_{\R^N} \bigl(V \ast \abs{\varphi}^p\bigr) \abs{\varphi}^p > 0,\\
\tag{$V_4$} 
\label{V4} 
& \lim_{\abs{x} \to \infty} V (x) = - \infty,
\end{align}
then the Choquard equation \eqref{equationChoquard} has a groundstate weak solution \(u \in H^1(\R^N)\) such that \(\int_{\R^N} (V \ast \abs{u}^p) \abs{u}^p > -\infty\).
\end{theorem}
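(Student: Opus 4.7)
The strategy I would follow is the relaxation program announced in the introduction. Set
\[
V_n := V^+ - \min\{V^-,n\} \qquad (n \in \N),
\]
so that \(V_n \ge -n\) is bounded below, \(V_n \ge V\) pointwise, and \(V_n \searrow V\). The truncated functional
\[
\mathcal{I}_n (u) := \tfrac{1}{2} \int_{\R^N} \bigl(\abs{\nabla u}^2 + u^2\bigr) - \tfrac{1}{2p}\int_{\R^N}\bigl(V_n \ast \abs{u}^p\bigr)\abs{u}^p
\]
is of class \(C^1\) on all of \(H^1 (\R^N)\): the \(V^+\) contribution is controlled through \eqref{V1} by the Hardy--Littlewood--Sobolev inequality combined with the Sobolev embedding, while the negative contribution is trivially controlled by \(V_n^- \le n\).

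For each fixed \(n\), I would produce a groundstate \(u_n\) of \(\mathcal{I}_n\) by minimization over the Nehari manifold \(\mathcal{N}_n := \{u \in H^1 (\R^N) \setminus \{0\} : \dualprod{\mathcal{I}_n'(u)}{u} = 0\}\), on which the functional simplifies to \(\mathcal{I}_n (u) = \frac{p-1}{2p}\norm{u}_{H^1}^2\). Since \(\mathcal{I}_n\) is translation invariant, minimizing sequences could escape to infinity; Lions' concentration-compactness (using \(p \ge 2\) and the fact that \(V^+ \in L^q\) controls the nonlinearity) restores tightness after a suitable translation and yields \(u_n \in \mathcal{N}_n\) with \(\mathcal{I}_n(u_n) = c_n := \inf_{\mathcal{N}_n}\mathcal{I}_n\), and a Lagrange-multiplier argument turns this minimizer into a weak solution of the truncated equation.

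To pass to the limit \(n \to \infty\), I would first bound \(c_n\) uniformly by testing \(\mathcal{I}_n\) against the unique Nehari projection \(t_n \varphi\) of the function \(\varphi\) from \eqref{V3}: the inequality \(\int (V_n \ast \abs{\varphi}^p)\abs{\varphi}^p \ge \int(V \ast \abs{\varphi}^p)\abs{\varphi}^p > 0\) forces \(t_n\) to stay bounded, and consequently \(c_n \le \frac{p-1}{2p}t_n^2\norm{\varphi}_{H^1}^2 \le C\) independently of \(n\). This yields a uniform \(H^1\) bound on \(u_n\), so that up to a further translation one may extract \(u_n \rightharpoonup u\) weakly in \(H^1 (\R^N)\) and almost everywhere.

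The main obstacle is to identify \(u\) as a nontrivial groundstate of the untruncated problem. Non-triviality follows from the standard Nehari lower bound \(\norm{u_n}_{L^{2p}}^{2p} \gtrsim 1\) combined with Lions' non-vanishing lemma, which is what makes the last translation meaningful. The delicate analytic point is that \(V_n^- \nearrow V^-\) with \(V^-\) unbounded, so the sequence \(\int(V_n^- \ast \abs{u_n}^p)\abs{u_n}^p\) has to be controlled uniformly before Fatou's lemma can be applied to conclude \(\int(V^- \ast \abs{u}^p)\abs{u}^p < +\infty\). This is precisely where \eqref{V4} is expected to play the decisive role: the divergence of \(-V\) at infinity penalizes any dispersion of the mass of \(u_n\), since a splitting into widely separated pieces would produce a cross-convolution term tending to \(-\infty\) and thus contradict the upper bound on \(c_n\); it is this concentration effect that keeps \(\int(V_n^- \ast \abs{u_n}^p)\abs{u_n}^p\) uniformly bounded. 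The \(V^+\) contribution passes to the limit by Hardy--Littlewood--Sobolev and weak continuity. Combining these, \(u\) satisfies \eqref{equationChoquard} weakly, and the identity \(c_n = \frac{p-1}{2p}\norm{u_n}_{H^1}^2\) together with lower semi-continuity and the comparison \(c_n \le \mathcal{I}_n(t_n v) \le \mathcal{I}(v)\) valid for any nontrivial solution \(v\) of the original equation finally identifies \(u\) as a groundstate.
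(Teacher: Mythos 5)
Your overall strategy---truncating $V^-$ at level $n$, solving the truncated problem by a translation-invariant compactness argument, bounding the critical levels via \eqref{V3}, and using \eqref{V4} to prevent dispersion of mass when $n \to \infty$---is exactly the paper's relaxation scheme; your Nehari-manifold minimization is equivalent, by homogeneity, to the paper's constrained maximization of $\int_{\R^N}(V_\lambda \ast \abs{u}^p)\abs{u}^p$ on the unit sphere of $H^1(\R^N)$. Two smaller remarks: the $V^+$ term is controlled by Young's convolution inequality (since $V^+ \in L^q$), not by Hardy--Littlewood--Sobolev, which applies to Riesz kernels; and testing the limit equation against a solution $v$ (used in your final groundstate comparison) requires a cutoff argument, since the weak formulation only admits compactly supported test functions (the paper's \cref{lemmaTest}).

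The genuine gap is the sentence ``Combining these, $u$ satisfies \eqref{equationChoquard} weakly.'' Fatou's lemma does give $\int_{\R^N}(V^- \ast \abs{u}^p)\abs{u}^p < +\infty$, but that does not let you pass to the limit in the term $\int_{\R^N}(V_n^- \ast \abs{u_n}^p)\abs{u_n}^{p-2}u_n\varphi$ of the Euler--Lagrange equation: writing $V_n^- \ast \abs{u_n}^p - V_n^- \ast \abs{u}^p = V_n^- \ast (\abs{u_n}^p - \abs{u}^p)$, the kernel $V_n^-$ is unbounded as $n \to \infty$, so strong $L^1$ convergence of $\abs{u_n}^p$ gives you nothing. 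This is precisely where assumption \eqref{V2}, which your proposal never invokes, is indispensable: coarse continuity of $V^-$ implies a large-scale Lipschitz bound (\cref{sr}), which yields $\abs{(V_n^- \ast \abs{u_n}^p)(x) - (V_n^- \ast \abs{u}^p)(x) - (V_n^- \ast \abs{u_n}^p)(0) + (V_n^- \ast \abs{u}^p)(0)} \le C(1+\abs{x})\int_{\R^N}\bigabs{\abs{u_n}^p - \abs{u}^p}$, and a separate pointwise bound at the origin. Even then the limit of $V_n^- \ast \abs{u_n}^p$ is only $V^- \ast \abs{u}^p + \mu$ for some constant $\mu \ge 0$, so the limit equation a priori carries an extra term $\mu\int_{\R^N}\abs{u}^{p-2}u\,\varphi$, and one must prove $\mu = 0$ afterwards by testing the limit equation against $u$ itself and comparing with the energy identity. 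Your proposal is missing this entire mechanism, which is the technical heart of the limit passage.
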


Here and in the sequel, we have set 
\(V^+ = \max (V, 0)\) and \(V^- = \max (-V, 0)\) so that 
\(V = V^+ - V_-\).

The notion of weak solution in \cref{theoremMainGroundstate} that we use comes with suitable integrability assumptions in order to ensure that the equation makes sense.

\begin{definition}
\label{definitionWeak}
The function \(u \in H^1_{\mathrm{loc}} (\R^N)\) is \emph{a weak solution to the Choquard equation} \eqref{equationChoquard} whenever \((V \ast \abs{u}^p) \abs{u}^{p - 2} u \in H^{-1}_{\mathrm{loc}} (\R^N)\) and, for every test function \(\varphi \in H^1 (\R^N)\) supported in a compact set, one has
\[
  \int_{\R^N} \nabla u \cdot \nabla \varphi + u \,\varphi
  = \int_{\R^N} (V \ast \abs{u}^p) \abs{u}^{p - 2} u\, \varphi.
\]
\end{definition}

Examples of self-interaction potentials \(V : \R^N \to \R\) satisfying the assumptions of \cref{theoremMainGroundstate} are \(V (x) = \frac{1}{2\pi} \ln \frac{1}{\abs{x}}\), corresponding when \(N = 2\) to the Newtonian kernel, \(V (x) = 1 - \frac{\abs{x}}{2}\) corresponding when \(N = 1\) to the Newtonian kernel and more exotic examples of the form \(V (x) = \abs{x}^\alpha - \abs{x}^\beta\) with $\max\{(N-2)p-2N,-N\}<\alpha < \beta \le 1$. 
\Cref{theoremMainGroundstate} also allows for \emph{anisotropic potentials} such as
\(V (x) = \frac{1}{2 \pi} \log \frac{1}{\abs{A x}}\) with \(A : \R^n \to \R^n\) linear and invertible or \(V (x) = \frac{1}{2 \pi} \log \abs{x} - \frac{\kappa}{2 \pi} \log \abs{A x}\), with \(\kappa \in (0, 1)\); their three-dimensional counterparts have been studied recently \cite{Ricaud2016}.

The evenness of the potential \(V\) is required to ensure the variational character of the problem. The boundedness assumption \eqref{V1} on the function \(V\) ensures that the convolution term takes a well-defined, possibly infinite, value in \([-\infty, +\infty)\). It also implies that \(\frac{1}{p} \ge \frac{1}{2} - \frac{1}{N}\). The assumption \eqref{V2} is a \emph{coarse continuity} assumption which governs the large-scale continuity of the function \(V\) \cite{Roe2003}. This condition plays a role in the estimates on the convolution term.

The condition \eqref{V3} is necessary since any solution \(u\) should satisfy the identity
\[
 \int_{\R^N} \bigl( \abs{\nabla u}^2 + \abs{u}^2 \bigr)
 =\int_{\R^N} \bigl(V \ast \abs{u}^p\bigr) \abs{u}^p,
\]
obtained by testing the Choquard equation \eqref{equationChoquard} against \(u\)  (see \cref{lemmaTest}). One way to ensure the condition \eqref{V3} is to require the potential \(V\) to be positive in a neighborhood of \(0\).

The groundstate solution in \cref{theoremMainGroundstate}, is constructed as an extremal function for the supremum
\begin{equation}\label{a}
a:=\sup\, \Bigl\{\int_{\R^N}\bigl(V * \abs{u}^p \bigr)\,\abs{u}^p
\st u\in H^1\bigl(\R^N\bigr) \text{ and }\int_{\R^N}\bigl(\abs{\nabla u}^2+\abs{u}^2\bigr)=1 \Bigr\}.
\end{equation}
Due to the homogeneity of the nonlinear term, one can easily deduce that a multiple of such an extremal is a solution of \eqref{equationChoquard} at the energy level
\begin{equation}\label{b}
b:=\inf \Bigl\{\sup_{t>0}\mathcal{I} (tu) \st 
u\in H^1\bigl(\R^N\bigr)\setminus\{0\}  \Bigr\};
\end{equation}
It is also easily seen to be a ground state, in the sense we previously expressed.

% The relaxation scheme presented above also allows us to obtain a multiplicity result for solutions:
% 
% \begin{theorem}\label{theoremMainMultiplicity}
% Let \(N \in \N\), \(p \in [2, +\infty)\) and \(q \in [1, + \infty)\).
% If
% \begin{align}
% \tag{$V_1$}  & V^+ \in L^q (\R^N) \text{ with }  \tfrac{1}{p} (1 - \tfrac{1}{2 q}) > \tfrac{1}{2} - \tfrac{1}{N} \\
% \tag{$V_2$}  & \sup_{\abs{x - y} \le 1} \abs{V^- (x) - V^- (y)} < + \infty,\\
% \tag{$V_3$}  & \text{there exists \(\varphi \in C^1_c (\R^N)\) such that}
% \int_{\R^N} \bigl(V \ast \abs{\varphi}^p\bigr) \abs{\varphi}^p > 0,\\
% \tag{$V_5$} \label{v3'} & V \text{ is radial,}
% \end{align}
% then the Choquard equation \eqref{equationChoquard} has a sequence of radial solutions $(u_k)_{k\in\N}$ such that $\mathcal{I} (u_k)\us{k\to+\infty}\to+\infty$.
% \end{theorem}
% 
% The construction of the sequence of solutions of \cref{theoremMainGroundstate} is based on the Krasnoselskii genus and suitable deformation lemma applied to the relaxed problem and then a passage to the limit 
% 
% Our proof requires at some point some extra compactness property which holds  in the space of radial functions $H^1_{\mrm{rad}}\bigl(\R^N\bigr)$. For this reason, we have to take a radial potential. On the other hand, with respect to \cref{theoremMainGroundstate} we can drop the anti-coercivity \eqref{V4}.

The content of the paper is the following:
in \cref{sectionGroundStateRelaxed} we study the groundstates of a relaxed problem and in \cref{sectionGroundState} we prove the convergence of such ground states to ground states for the Choquard equation \eqref{equationChoquard}.

\section{Groundstate solutions for the relaxed problem}
\label{sectionGroundStateRelaxed}

In order to construct solutions to the Choquard equation \eqref{equationChoquard}, we introduce the parameter \(\lambda \in [0, + \infty)\) and we define a \emph{relaxed potential} \(V_\lambda : \R^N \to \R\) defined for each \(x \in \R^N\) by 
\begin{equation}
\label{eqDef_relaxed_potential}
 V_\lambda (x) := \max \big\{ V (x), {-\lambda} \bigr\}.
\end{equation}
The \emph{relaxed problem} is obtained from  the original Choquard equation \eqref{equationChoquard} by replacing the original potential \(V\) by the relaxed potential \(V_\lambda\):
\begin{align}
\label{problemRelaxed}%
\tag{$\mathcal{C}_\lambda$}%
-\Delta u+u&=\bigl(V_\lambda *\abs{u}^p\bigr)\abs{u}^{p-2}u &
& \text{in }\R^N.
\end{align}

A fundamental tool in our analysis is Young's convolution inequality (see for example \citelist{\cite{LiebLoss2001}*{Theorem 4.2}\cite{Bogachev2007}*{Theorem 3.9.2}}), which plays the same role as the Hardy--Littlewood--Sobolev inequality for the Choquard equation with a Riesz self-interaction potential.

\begin{proposition}[Young's convolution inequality]\label{young}
If \(V \in L^q (\R^N)\) and \(f \in L^r (\R^N)\) with \(1 < \frac{1}{q} + \frac{1}{r} \le 2\), then \(V \ast f \in L^s (\R^N)\) 
with \(\frac{1}{s} = \frac{1}{q} + \frac{1}{r} - 1\) and 
\[
 \int_{\R^N} \abs{V \ast f}^s
 \le \biggl(\int_{\R^N} \abs{V}^q \biggr)^\frac{s}{q}
 \biggl(\int_{\R^N} \abs{f}^r \biggr)^\frac{s}{r}.
\]
In particular, if \(f \in L^{\frac{2q}{2 q - 1}} (\R^N)\), then 
\[
 \biggabs{\int_{\R^N} (V \ast f)f}
 \le  \biggl(\int_{\R^N} \abs{V}^q \biggr)^\frac{1}{q}
 \biggl(\int_{\R^N} \abs{f}^\frac{2q}{2q - 1} \biggr)^{2 - \frac{1}{q}}.
\]
\end{proposition}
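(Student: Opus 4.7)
My plan is to derive the first inequality from a three-factor Hölder inequality in the convolution variable, and then to deduce the \emph{in particular} clause by applying the standard two-factor Hölder inequality to the pair \(V \ast f\) and \(f\) and invoking the first part.

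For the first inequality, I rewrite the Young relation \(\frac{1}{s} = \frac{1}{q} + \frac{1}{r} - 1\) equivalently as \(\frac{1}{s} + \frac{s-q}{qs} + \frac{s-r}{rs} = 1\). This identifies the three Hölder exponents matching the factorisation
\[
\abs{V(x-y) f(y)} = \bigl(\abs{V(x-y)}^q \abs{f(y)}^r\bigr)^{1/s}\, \abs{V(x-y)}^{(s-q)/s}\, \abs{f(y)}^{(s-r)/s}.
\]
Hölder's inequality applied in the variable \(y\) with exponents \(s\), \(qs/(s-q)\), \(rs/(s-r)\) then gives the pointwise estimate
\[
\abs{(V \ast f)(x)}
\le \biggl(\int_{\R^N} \abs{V(x-y)}^q \abs{f(y)}^r \dif y\biggr)^{1/s}
\biggl(\int_{\R^N}\abs{V}^q\biggr)^{\frac{s-q}{qs}}
\biggl(\int_{\R^N}\abs{f}^r\biggr)^{\frac{s-r}{rs}}.
\]
Raising to the \(s\)-th power, integrating in \(x\), and using Fubini's theorem together with the translation invariance of Lebesgue measure to collapse the double integral into \(\bigl(\int\abs{V}^q\bigr)\bigl(\int\abs{f}^r\bigr)\) then delivers the stated bound on \(\int_{\R^N}\abs{V \ast f}^s\).

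For the \emph{in particular} clause, the hypothesis \(r = \frac{2q}{2q-1}\) is precisely what makes \(r\) and \(s\) mutually conjugate. Two-factor Hölder applied to \(\int_{\R^N} (V \ast f) f\), followed by the first inequality, produces a bound by \(\bigl(\int \abs{V}^q\bigr)^{1/q} \bigl(\int \abs{f}^r\bigr)^{2/r}\), and the arithmetic identity \(\frac{2}{r} = 2 - \frac{1}{q}\) rewrites the exponent in the stated form. The only technical wrinkle I foresee is that \(qs/(s-q)\) or \(rs/(s-r)\) may degenerate to \(+\infty\) at the endpoints \(q=1\) or \(r=1\); in such cases the three-factor Hölder inequality reduces to a two-factor one and the corresponding norm is to be read as an essential supremum. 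Since the inequality is a classical textbook result, I expect no conceptual difficulty beyond this routine case analysis and the careful bookkeeping of exponents.
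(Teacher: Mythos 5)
Your argument is correct: the three-factor H\"older decomposition with exponents \(s\), \(qs/(s-q)\), \(rs/(s-r)\) followed by Fubini is the standard textbook proof of Young's inequality, and the \emph{in particular} clause follows exactly as you say since \(r=\frac{2q}{2q-1}\) and \(s=2q\) are conjugate. The paper gives no proof of its own here --- it simply cites the literature (Lieb--Loss, Bogachev) --- so your write-up supplies the classical argument those references contain, including the correct handling of the degenerate endpoint cases.
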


% Solutions to the relaxed problem \eqref{problemRelaxed} are at least formally critical points of the functional 
% $\mathcal{I}_\lambda$ defined by 
% \begin{equation}\label{ila}
% \mathcal{I}_\lambda(u)
% :=\frac{1}{2}\int_{\R^N}\bigl(\abs{\nabla u}^2+\abs{u}^2\bigr)-\frac{1}{2p}\int_{\R^N}\bigl(V_\lambda*\abs{u}^p\bigr)\abs{u}^p
% \end{equation}

% \begin{theorem}\label{theoremRelaxedGroundstate}
% Let \(N \in \N\) and \(p \in [2, +\infty)\).
% If \(V \ge 0\) in a neighborhood of \(0\), with strict inequality on a set of positive measure,  \(\frac{1}{p} > \frac{1}{2} - \frac{1}{N}\) and if \(V^+ \in L^q (\R^N)\) with \(\frac{1}{p} (1 - \frac{1}{2q}) > \frac{1}{2} - \frac{1}{N}\), 
% then the relaxed Choquard equation \eqref{problemRelaxed} has a groundstate solution.
% \end{theorem}

As a first application of the estimate of \cref{young}, we have the well-definiteness of our relaxed variational problem.

\begin{lemma}
\label{lemma_Relaxed_Functional_Well_Defined}
Let \(N \in \N\), \(p \in (1, +\infty)\) and \(V : \R^N \to \R\) be an even measurable function.
If \(p \ge 2\) and if \(V^+ \in L^q (\R^N)\) with \(\frac{1}{p}(1 - \frac{1}{2 q}) \ge \frac{1}{2} - \frac{1}{N}\), then for every \(\lambda \in [0, + \infty)\), the functional \(\mathcal{J}_{\lambda} : H^1 (\R^N) \to \R\) defined for each \(u \in H^1 (\R^N)\) by
\[
 \mathcal{J}_\lambda (v) = \int_{\R^N} \bigl(V_\lambda \ast \abs{u}^p \bigr) \abs{u}^p
\]
is well-defined and continuously differentiable.
\end{lemma}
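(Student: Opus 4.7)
The strategy is to split \(V_\lambda = V_\lambda^+ - V_\lambda^-\) and observe that since \(\lambda \ge 0\), one has the pointwise identity \(V_\lambda^+ = V^+\) together with the pointwise bound \(0 \le V_\lambda^- \le \lambda\). Consequently, \(V_\lambda^+ \in L^q(\R^N)\) by assumption and \(V_\lambda^- \in L^\infty(\R^N)\), which lets us treat the two pieces by entirely different estimates.

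For the \(V^+\)-piece, I would apply Young's inequality (\cref{young}) to obtain
\[
 \Bbigabs{\int_{\R^N} (V^+ \ast \abs{u}^p)\abs{u}^p}
 \le \norm{V^+}_{L^q}\bigl(\norm{u}_{L^{2pq/(2q-1)}}^{p}\bigr)^2,
\]
and then verify that the integrability exponent \(\frac{2pq}{2q-1}\) lies in the admissible range of the Sobolev embedding \(H^1(\R^N) \hookrightarrow L^r(\R^N)\). The lower bound \(\frac{2pq}{2q-1} \ge 2\) follows from \(p \ge 2\), and the upper bound is exactly the hypothesis \(\frac{1}{p}(1-\frac{1}{2q}) \ge \frac{1}{2} - \frac{1}{N}\) (with the usual interpretation \(\frac{2N}{N-2} = +\infty\) when \(N \le 2\)). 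For the \(V_\lambda^-\)-piece, Young's inequality with exponents \((1,1,\infty)\) (or simply Fubini) gives
\[
 \Bbigabs{\int_{\R^N}(V_\lambda^- \ast \abs{u}^p)\abs{u}^p}
 \le \lambda\, \norm{u}_{L^p}^{2p},
\]
and \(\norm{u}_{L^p}\) is controlled by Sobolev embedding (\(p \ge 2\) and \(p \le 2pq/(2q-1) \le 2N/(N-2)\)). Combining the two bounds gives well-definedness and the continuity of \(\mathcal{J}_\lambda\).

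For the differentiability, I would first compute the Gâteaux derivative by the standard pointwise manipulation
\[
 \langle \mathcal{J}_\lambda'(u), v\rangle
 = 2p\int_{\R^N} \bigl(V_\lambda \ast \abs{u}^p\bigr)\abs{u}^{p-2} u\, v,
\]
and then verify that \(u \mapsto (V_\lambda \ast \abs{u}^p)\abs{u}^{p-2}u\) is continuous from \(H^1(\R^N)\) into \(H^{-1}(\R^N)\). Splitting again \(V_\lambda = V^+ - V_\lambda^-\), the linear functional \(v \mapsto \int (V_\lambda \ast \abs{u}^p)\abs{u}^{p-2} u\, v\) is controlled by Hölder's inequality, Young's inequality and Sobolev embedding in essentially the same way as above, yielding an estimate of the form \(\norm{\mathcal{J}_\lambda'(u)}_{H^{-1}} \le C(\norm{V^+}_{L^q} + \lambda)\norm{u}_{H^1}^{2p-1}\).

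The only real obstacle is the continuity (not just boundedness) of \(u \mapsto \mathcal{J}_\lambda'(u)\). For this I would take \(u_n \to u\) in \(H^1(\R^N)\), pass to a subsequence converging a.e., and apply a Vitali-type convergence argument: the family \(\{\abs{u_n}^p\}\) is equi-integrable in \(L^{2q/(2q-1)} \cap L^1\) thanks to Sobolev embedding, which transfers via Young to equi-integrability of \(V_\lambda \ast \abs{u_n}^p\) as a factor, and the superposition operator \(u \mapsto \abs{u}^{p-2}u\) is continuous from \(L^r\) to \(L^{r/(p-1)}\). Combining these yields strong convergence of \(\mathcal{J}_\lambda'(u_n)\) to \(\mathcal{J}_\lambda'(u)\) in \(H^{-1}(\R^N)\), which completes the proof.
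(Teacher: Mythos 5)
Your proposal is correct and follows essentially the same route as the paper: split \(V_\lambda\) into \(V_\lambda^+ = V^+\) (handled by Young's convolution inequality together with the Sobolev embedding into \(L^{2pq/(2q-1)}\), the exponent range being checked exactly as you do) and the bounded part \(0 \le V_\lambda^- \le \lambda\) (handled by the trivial \(L^\infty\)--\(L^1\) bound giving \(\lambda \lVert u \rVert_{L^p}^{2p}\)), then deduce continuous differentiability from the same estimates combined with the continuity properties of the superposition map \(u \mapsto \abs{u}^{p-2}u\). The Vitali-type argument you sketch for the continuity of the derivative is simply a more explicit version of the paper's one-line appeal to the continuous differentiability of that superposition operator.
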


\begin{proof}%
[Proof of \cref{lemma_Relaxed_Functional_Well_Defined}]
By Young's inequality for convolution  (\cref{young}), we have 
\[
 \biggabs{\int_{\R^N} \bigl(V_\lambda^+ \ast \abs{u}^p \bigr) \abs{u}^p}
 \le \biggl(\int_{\R^N} \abs{V^+}^q \biggr)^\frac{1}{q}
 \biggl(\int_{\R^N} \abs{u}^\frac{2 p q}{2q - 1} \biggr)^{2 - \frac{1}{q}}.
\]
The second factor is controlled by the Sobolev inequality
since by our assumption, we have
\(
  \frac{1}{2} - \frac{1}{N} \le \frac{1}{p}\Bigl(1 - \frac{1}{2 q}\Bigr) \le \frac{1}{p} \le \frac{1}{2}
\).
On the other hand, since \(0 \le V_{\lambda}^- \le \lambda\) on \(\R^N\), we have 
\[
 \biggabs{\int_{\R^N} \bigl(V_\lambda^- \ast \abs{u}^p \bigr) \abs{u}^p}
 \le \lambda \biggl(\int_{\R^N} \abs{u}^p\biggr)^2.
\]
which is is controlled by the Sobolev embedding whenever
\(
 \frac{1}{2} - \frac{1}{N} \le \frac{1}{p} \le \frac{1}{2}
\).

The continuous differentiability follows from the same estimates together with the fact that the superposition mapping \(u \in H^1 (\R^N) \mapsto \abs{u}^{p - 2} u \in L^r (\R^N)\) is continuously differentiable whenever \(\frac{1}{2} - \frac{1}{N} \le \frac{p - 1}{r}\le \frac{1}{2}\).
\end{proof}

We are going to construct a multiple of a solution of the relaxed problem \eqref{problemRelaxed} by showing that the following supremum is achieved:
\begin{equation}%
\label{eqDef_a_lambda}
a_\lambda
  :=\sup \,\biggl\{ \int_{\R^N}\bigl(V_\lambda*\abs{u}^p\bigr)\abs{u}^p 
  \st u\in H^1\bigl(\R^N\bigr) 
  \text{ and } \int_{\R^N}\bigl(\abs{\nabla u}^2+\abs{u}^2\bigr)=1 \biggr\}.
\end{equation}

In order to prove this we rely on a Brezis--Lieb inequality for the relaxed potential \(V_\lambda\).

\begin{lemma}
\label{lemmaBrezisLieb}
Let \(N \in \N\), \(p, q \in [1, +\infty)\) and \(V : \R^N \to \R\) be an even measurable function.
If \(V^+ \in L^q (\R^N)\), if the sequence \((u_n)_{n \in \N}\) converges almost everywhere to \(u : \R^N \to \R\) 
and is bounded in \(L^{p} (\R^N) \cap L^\frac{2 pq}{2 q - 1} (\R^N)\),
then for every \(\lambda \in [0, +\infty)\), we have
\begin{multline*}
  \lim_{n\to+\infty}
  \int_{\R^N}\bigl(V_\lambda*\abs{u_n}^p\bigr)\abs{u_n}^p-\int_{\R^N}\bigl(V_\lambda*\abs{u_n - u}^p\bigr)\abs{u_n - u}^p + 2  \int_{\R^N} (V_\lambda^- \ast \abs{u}^p) \abs{u_n - u}^p\\
  = \int_{\R^N}\bigl(V_\lambda *\abs{u}^p\bigr)\abs{u}^p.
\end{multline*}
\end{lemma}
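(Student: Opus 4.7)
My approach is to split $V_\lambda = V^+ - V_\lambda^-$, exploiting the facts that the positive part $(V_\lambda)^+$ coincides with $V^+$ and the negative part $V_\lambda^- = \min(V^-, \lambda)$ is bounded pointwise by $\lambda$, and then to analyze each bilinear functional $B_\pm(f) := \int_{\R^N} (V_\lambda^\pm \ast f) f$ separately by a Brezis--Lieb-type expansion. Setting $w_n := \abs{u_n}^p - \abs{u_n - u}^p - \abs{u}^p$, the classical Brezis--Lieb lemma gives $w_n \to 0$ in $L^1(\R^N)$ from the $L^p$-boundedness of $\{u_n\}$, while the refined Brezis--Lieb argument applied to the map $t \mapsto \abs{t}^p$ together with the $L^{2pq/(2q-1)}(\R^N)$-boundedness of $\{u_n\}$ yields also $w_n \to 0$ in $L^{2q/(2q-1)}(\R^N)$.

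Inserting the identity $\abs{u_n}^p = \abs{u_n - u}^p + \abs{u}^p + w_n$ into each bilinear form and using the symmetry induced by the evenness of $V$, I obtain for each sign
\[
B_\pm(\abs{u_n}^p) - B_\pm(\abs{u_n - u}^p) - B_\pm(\abs{u}^p) = 2\int_{\R^N}(V_\lambda^\pm \ast \abs{u}^p)\abs{u_n - u}^p + \mathcal{E}_n^\pm,
\]
where $\mathcal{E}_n^\pm$ collects the three contributions that contain $w_n$. For the positive part, Young's inequality (\cref{young}) with $V^+ \in L^q(\R^N)$ places $V^+ \ast w_n$ and $V^+ \ast \abs{u}^p$ in $L^{2q}(\R^N)$, so $\mathcal{E}_n^+ \to 0$ follows from $w_n \to 0$ in $L^{2q/(2q-1)}(\R^N)$; moreover the cross term itself tends to zero because $\abs{u_n - u}^p \rightharpoonup 0$ weakly in the reflexive space $L^{2q/(2q-1)}(\R^N)$ (a consequence of almost-everywhere convergence together with uniform boundedness). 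For the negative part, the bound $\norm{V_\lambda^-}_{L^\infty} \le \lambda$ combined with the $L^1$-boundedness of $\abs{u}^p$, $\abs{u_n-u}^p$ and $w_n$ places every relevant convolution in $L^\infty(\R^N)$ uniformly, so $\mathcal{E}_n^- \to 0$ from $w_n \to 0$ in $L^1(\R^N)$; however the cross term $2\int(V_\lambda^- \ast \abs{u}^p)\abs{u_n - u}^p$ does \emph{not} vanish, because the weak $L^1$-convergence needed to pair against the merely $L^\infty$ function $V_\lambda^- \ast \abs{u}^p$ may fail (concentration of $u_n - u$ at infinity is compatible with $V_\lambda^-$ remaining of size $\lambda$ outside a large ball, by \eqref{V4}).

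Subtracting the $V_\lambda^-$ identity from the $V^+$ identity produces exactly the claimed convergence, with the surviving negative-part cross term appearing as the explicit correction on the left-hand side. The main obstacle is precisely this asymmetry: unlike the Riesz potential setting where every cross term vanishes by a standard weak-strong pairing, the low integrability of $V_\lambda^-$ (merely in $L^\infty$, not in any $L^q$ with $q < \infty$) forces us to \emph{retain} the correction term rather than absorb it into an error; once this is accepted, the remaining estimates are routine consequences of Young's inequality and the Brezis--Lieb controls on $w_n$.
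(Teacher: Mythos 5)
Your proposal is correct and follows essentially the same route as the paper: both split $V_\lambda$ into $V^+$ and the bounded negative part $V_\lambda^-$, invoke the Brezis--Lieb convergence of $\abs{u_n}^p-\abs{u_n-u}^p-\abs{u}^p$ to $0$ in $L^{2q/(2q-1)}(\R^N)$ and in $L^1(\R^N)$, pair via Young's inequality and the weak convergence $\abs{u_n-u}^p\rightharpoonup 0$ for the positive part, and retain the non-vanishing cross term $2\int_{\R^N}(V_\lambda^-\ast\abs{u}^p)\abs{u_n-u}^p$ for the negative part. The only difference is bookkeeping (you expand the quadratic form through $w_n$ into six terms, while the paper groups them slightly differently), which does not change the substance of the argument.
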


\Cref{lemmaBrezisLieb} is a nonlocal version of the classical Brezis--Lieb lemma \cite{BrezisLieb1983}.
Similar identities and inequalities have been proved when the self-interaction potential \(V\) has constant sign \citelist{%
\cite{Ackermann2006}*{\S 5.1}%
\cite{mv13}%
\cite{BellazziniFrankVisciglia}%
\cite{YangWei2013}*{lemma 3.2}%
\cite{MercuriMorozVanSchaftingen}%
}.
It implies the following interesting Brezis--Lieb inequality
\[
  \limsup_{n\to+\infty}
  \int_{\R^N}\bigl(V_\lambda*\abs{u_n}^p\bigr)\abs{u_n}^p-\int_{\R^N}\bigl(V_\lambda*\abs{u_n - u}^p\bigr)\abs{u_n - u}^p
  \le\int_{\R^N}\bigl(V_\lambda *\abs{u}^p\bigr)\abs{u}^p.
\]

\begin{proof}[Proof of \cref{lemmaBrezisLieb}]
We treat separately the positive and negative contributions of the self-interaction potential \(V\). For the positive part, we follow essentially the argument for the Riesz potential \cite{mv13}*{Lemma 2.4}. 
Since by assumption the sequence \((u_n)_{n \in \N}\) is bounded in \(L^{2pq/(q - 1)}( \R^N)\) and converges almost everywhere to \(u\), the sequence \((\abs{u_n - u}^p)_{n \in \N}\) converges weakly to \(0\) in \(L^{2q/(2 q - 1)} (\R^N)\) (see for example \citelist{ \cite{Bogachev2007}*{Proposition 4.7.2}\cite{Willem2013}*{
Proposition 5.4.7}}) and by the classical Brezis--Lieb lemma \cite{BrezisLieb1983}*{Theorem 1}, the sequence $(\abs{u_n}^p-\abs{u_n - u}^p)_{n \in \N}\) converges strongly to the function \(\abs{u}^p\) in $L^{2q/(2q-1)}\bigl(\R^N\bigr)$.
By Young's convolution inequality (\cref{young}), the sequence $(V^+*(\abs{u_n}^p-\abs{u_n - u}^p))_{n \in \N}\) converges strongly to \(V^+*\abs{u}^p$ in $L^{2q}\bigl(\R^N\bigr)$. Therefore, since the function \(V^+\) is even,
\begin{multline}
\label{eqBLpos}
  \lim_{n \to \infty} 
    \int_{\R^N}\bigl(V^+*\abs{u_n}^p\bigr)\abs{u_n}^p-\int_{\R^N}\bigl(V^+*\abs{u_n - u}^p\bigr)\abs{u_n - u}^p\\
=
  \lim_{n \to \infty} 
    \int_{\R^N}
      \bigl(V^+*(\abs{u_n}^p-\abs{u_n - u}^p)\bigr)\bigl((\abs{u_n}^p-\abs{u_n - u}^p)+2\abs{u_n - u}^p\bigr)\\
  = 
    \int_{\R^N}\bigl(V^+*\abs{u}^p\bigr)\abs{u}^p.
\end{multline}

For the negative part, we first rewrite for each \(n \in \N\) the integrals as 
\begin{multline*}
\int_{\R^N}\bigl(V_\lambda^-*\abs{u_n}^p\bigr)\abs{u_n}^p-\int_{\R^N}\bigl(V_\lambda^-*\abs{u_n - u}^p\bigr) \abs{u_n - u}^p - 2  \int_{\R^N} (V_\lambda^- \ast \abs{u}^p) \abs{u_n - u}^p\\
= \int_{\R^N}\bigl(V_\lambda^-*(\abs{u_n}^p - \abs{u_n - u}^p)\bigr)(\abs{u_n}^p - \abs{u_n - u}^p)\\
+ 2 \int_{\R^N} (V_\lambda^- \ast (\abs{u_n}^p - \abs{u_n - u}^p - \abs{u}^p)) \abs{u_n - u}^p.
\end{multline*}
Since the sequence \((u_n)_{n \in \N}\) is bounded in \(L^p (\R^N)\), by the classical Brezis--Lieb lemma again, the sequence \((\abs{u_n}^p - \abs{u_n - u}^p)_{n \in \N}\) converges strongly to \(\abs{u}^p\) in \(L^1 (\R^N)\), and thus, 
\[
 \lim_{n \to \infty} \int_{\R^N}\bigl(V_\lambda^-*(\abs{u_n}^p - \abs{u_n - u}^p)\bigr)(\abs{u_n}^p - \abs{u_n - u}^p) = \int_{\R^N}\bigl(V_\lambda^-*\abs{u}^p\bigr)\abs{u}^p
\]
and
\[
 \lim_{n \to \infty} \int_{\R^N} (V_\lambda^- \ast (\abs{u_n}^p - \abs{u_n - u}^p - \abs{u}^p)) \abs{u_n - u}^p = 0.
\]
Hence, we obtain
\begin{multline}
\label{eqBLneg}
\lim_{n \to \infty} \int_{\R^N}\bigl(V_\lambda^-*\abs{u_n}^p\bigr)\abs{u_n}^p-\int_{\R^N}\bigl(V_\lambda^-*\abs{u_n - u}^p\bigr) \abs{u_n - u}^p - 2  \int_{\R^N} (V_\lambda^- \ast \abs{u}^p) \abs{u_n - u}^p\\
=  \int_{\R^N}\bigl(V_\lambda^-*\abs{u}^p\bigr)\abs{u}^p.
\end{multline}

The conclusion follows from the combination of the identities \eqref{eqBLpos} and \eqref{eqBLneg}.
\end{proof}

\begin{proposition}
\label{lemmaRelaxed}
Let \(N \in \N\) and \(p \in [2, +\infty)\).
If the function \(V : \R^N \to \R\) satisfies the assumptions \eqref{V1} and \eqref{V2} of \cref{theoremMainGroundstate}, then for every \(\lambda \in [0, +\infty)\) the supremum \(a_\lambda\) is achieved.
\end{proposition}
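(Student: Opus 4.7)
The plan is to apply a translation-invariant concentration--compactness argument to a maximizing sequence, using the Brezis--Lieb identity of \cref{lemmaBrezisLieb} to compare the energy of the weak limit with the ``escaping mass'' contribution, and thereby to conclude strong convergence.

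I would first fix a maximizing sequence $(u_n)_{n \in \N} \subset H^1(\R^N)$ with $\norm{u_n}_{H^1} = 1$ and $\int_{\R^N}(V_\lambda \ast \abs{u_n}^p)\abs{u_n}^p \to a_\lambda$. Because $V_\lambda$ inherits the evenness of $V$ and because both the constraint and the functional are invariant under translations of $\R^N$, the whole problem is translation invariant. I would then invoke Lions' concentration--compactness lemma to rule out vanishing: if $\sup_{y \in \R^N}\int_{B(y,1)}\abs{u_n}^2 \to 0$, then $u_n \to 0$ in $L^r(\R^N)$ for every $r \in (2, 2^*)$, and in particular in $L^{2pq/(2q-1)}(\R^N)$ by assumption \eqref{V1}; \cref{young} then forces $\int(V_\lambda^+ \ast \abs{u_n}^p)\abs{u_n}^p \to 0$, and since the negative part contributes non-positively, this would yield $a_\lambda \le 0$, contradicting the interesting case $a_\lambda > 0$. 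Hence there exist $y_n \in \R^N$ and $c > 0$ with $\int_{B(y_n,1)} \abs{u_n}^2 \ge c$; replacing $u_n$ by its translate and extracting a subsequence, I may assume $u_n \rightharpoonup u$ weakly in $H^1$, almost everywhere, and strongly in $L^2_{\mathrm{loc}}$, with $u \ne 0$.

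Next, I would combine \cref{lemmaBrezisLieb} with the Hilbertian splitting $\norm{u_n}_{H^1}^2 = \norm{u}_{H^1}^2 + \norm{u_n - u}_{H^1}^2 + o(1)$. Writing $t^2 := \norm{u}_{H^1}^2 \in (0, 1]$, so that $\norm{u_n - u}_{H^1}^2 \to 1 - t^2$, and inserting the unit vectors $u/t$ and $(u_n - u)/\norm{u_n - u}_{H^1}$ into the definition of $a_\lambda$ gives
\[
  \int_{\R^N}(V_\lambda \ast \abs{u}^p)\abs{u}^p \le a_\lambda\, t^{2p}, \qquad \limsup_n \int_{\R^N}(V_\lambda \ast \abs{u_n - u}^p)\abs{u_n - u}^p \le a_\lambda (1 - t^2)^p.
\]
Substituting these bounds into the identity of \cref{lemmaBrezisLieb} and discarding the non-negative correction $2\int(V_\lambda^- \ast \abs{u}^p)\abs{u_n - u}^p$ leads to
\[
  a_\lambda\bigl[1 - t^{2p} - (1 - t^2)^p\bigr] \le 0.
\]
Since $p \ge 2$, the super-additivity $(t^2 + (1 - t^2))^p > t^{2p} + (1 - t^2)^p$ is strict for every $t \in (0, 1)$, which forces $t = 1$. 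Consequently $u_n \to u$ strongly in $H^1(\R^N)$ and $u$ attains the supremum $a_\lambda$.

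The main obstacle is the extra correction $2\int(V_\lambda^-\ast\abs{u}^p)\abs{u_n - u}^p$ appearing in \cref{lemmaBrezisLieb}: unlike in the case of a non-negative self-interaction potential such as a Riesz kernel, this term need \emph{not} tend to zero, since $V_\lambda^-$ does not decay at infinity in general. The above argument circumvents this by using only its sign, combined with the strict super-additivity of $s \mapsto s^p$ on $[0,1]$ for $p > 1$, to force $t = 1$ and thereby deduce strong convergence of the maximizing sequence.
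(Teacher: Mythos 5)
Your proof follows essentially the same strategy as the paper's: rule out vanishing by combining Young's inequality with a Lions-type lemma, translate to obtain a nontrivial weak limit, and then combine the Brezis--Lieb identity of \cref{lemmaBrezisLieb} (keeping only the sign of the extra term \(2\int (V_\lambda^-\ast\abs{u}^p)\abs{u_n-u}^p\ge 0\)) with the homogeneity bound \(\int(V_\lambda\ast\abs{v}^p)\abs{v}^p\le a_\lambda\norm{v}_{H^1}^{2p}\) and the superadditivity of \(s\mapsto s^p\). Your final bookkeeping via \(t^{2p}+(1-t^2)^p\le 1\) is algebraically equivalent to the paper's \((x+1)^p-x^p\ge 1\), and your extra conclusion that \(t=1\) and the convergence is strong is exactly the paper's subsequent remark. (Both your proof and the paper's implicitly need \(a_\lambda>0\), which requires \eqref{V3}; the proposition's hypotheses omit it, but the paper's own proof invokes it too, so this is not a defect of your argument.)

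The one genuine weak point is the vanishing step. You invoke the qualitative Lions lemma, which gives \(u_n\to 0\) in \(L^r(\R^N)\) only for \(r\) \emph{strictly} between \(2\) and \(2^*\). Assumption \eqref{V1} is the non-strict inequality \(\frac1p(1-\frac1{2q})\ge\frac12-\frac1N\), so for \(N\ge 3\) the exponent \(\frac{2pq}{2q-1}\) may equal \(2^*\), and in that critical case vanishing of the local \(L^2\) masses does not force \(\norm{u_n}_{L^{2^*}}\to 0\) (concentrating bubbles are a counterexample). The paper avoids this by using the quantitative interpolation inequality
\(\int_{\R^N}\abs{u}^r\le C\,\norm{u}_{H^1}^2\bigl(\sup_{x}\int_{B_1(x)}\abs{u}^r\bigr)^{1-2/r}\),
which is proved by summing the Sobolev inequality over a covering by unit balls and remains valid at \(r=2^*\); this yields a sequence of centers \(x_n\) with \(\liminf_n\int_{B_1(x_n)}\abs{u_n}^{2pq/(2q-1)}>0\) directly. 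You should either substitute this quantitative form or restrict to the subcritical case; otherwise the argument is complete.
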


The proof is inspired by the proof of the corresponding property for the Choquard equation with a Riesz potential
\cite{mv13}*{Proposition 2.2} with the additional difficulty that we have to take care of the convergence of the term involving $V^-$, for which we have less information (see \cref{lemmaBrezisLieb}).

\begin{proof}%
[Proof of \cref{lemmaRelaxed}]
\resetconstant
By the assumption \eqref{V3} we have \(a_\lambda > 0\).
Let $(w_n)_{n \in \N}$ be a sequence in \(H^1 \bigl(\R^N\bigr)\) satisfying
\begin{align}%
\label{wn}
      &\int_{\R^N}
        \bigl(\abs{\nabla w_n}^2+\abs{w_n}^2\bigr)
    =   
      1,&
  &\text{ and }&
    &\lim_{n \to \infty}       
      \int_{\R^N}
        \bigl(V_\lambda *\abs{w_n}^p\bigr)\abs{w_n}^p 
  = 
    a_\lambda.
\end{align}
Since the sequence $(w_n)_{n \in \N}$ is bounded in the space $H^1\bigl(\R^N\bigr)$, it converges weakly, up to a subsequence, to some function $w\in H^1\bigl(\R^N\bigr)$.
We first show that we may assume that $w\not\equiv 0$ on \(\R^N\). 

By an inequality of P.-L. Lions \cite{lio}*{Lemma I.2} (see also \citelist{\cite{Willem1996}*{lemma 1.21}\cite{mv13}*{lemma 2.3}\cite{VanSchaftingen2014}*{(2.4)}}), we have
\[
\begin{split}
\int_{\R^N}\abs{w_n}^\frac{2pq}{2q-1}
&\le \Cl{Clions} \int_{\R^N}\bigl(\abs{\nabla w_n}^2+\abs{w_n}^2\bigr) \;\left(\sup_{x\in\R^N}\int_{B_1(x)}\abs{w_n}^\frac{2pq}{2q-1}\right)^{1-\frac{1}{p} (2 - \frac{1}{q})}\\
&=\Cr{Clions} \left(\sup_{x\in\R^N}\int_{B_1(x)}\abs{w_n}^\frac{2pq}{2q-1}\right)^{1-\frac{1}{p} (2 - \frac{1}{q})}
.
\end{split}
\]
From this, we get, in view of Young's convolution inequality (\cref{young}),
\begin{equation*}
\begin{split}
\left(\sup_{x\in\R^N}\int_{B_1(x)}\abs{w_n}^\frac{2pq}{2q-1}\right)^{1-\frac{2q-1}{pq}}
%
% &\ge \frac{1}{\Cr{Clions}} \int_{\R^N}\abs{w_n}^\frac{2pq}{2q-1}
%
\ge \frac{\displaystyle \biggl( \int_{\R^N}
\bigl(V^+ \ast \abs{w_n}^p\bigr) \abs{w_n}^p\biggr)^\frac{q}{2q-1}}{\Cr{Clions} \displaystyle \Bigl(\int_{\R^N} \abs{V^+}^q\Bigr)^\frac{1}{2 q - 1} }.
\end{split}
\end{equation*}
Hence we deduce that 
\[
 \C \limsup_{n \to \infty}
\left(\sup_{x\in\R^N}\int_{B_1(x)}\abs{w_n}^\frac{2pq}{2q-1}\right)^{(2 - \frac{1}{q})(1-\frac{2q-1}{pq})}
\ge \lim_{n \to \infty} \int_{\R^N}
\bigl(V_\lambda \ast \abs{w_n}^p\bigr) \abs{w_n}^p
= a_\lambda > 0.
\]
Since \(\frac{1}{p} (1 - \frac{1}{2 q}) < \frac{1}{p} \le \frac{1}{2}\), this implies that there exists a sequence of points \((x_n)_{n \in \N}\) in \(\R^N\) such that 
\begin{equation}
\label{eqNonvanishing}
  \liminf_{n\to+\infty}\int_{B_1(x_n)}\abs{w_n}^\frac{2pq}{2q-1}>0.
\end{equation}
By replacing for each \(n\in \N\) the function \(w_n\) by its translation \(w_n(\cdot+x_n)\) we can assume that \eqref{eqNonvanishing} holds with \(x_n = 0\), and thus by the Rellich--Kondrashov compact embedding theorem that
\[
 \int_{B_1} \abs{w}^2 = \lim_{n \to \infty} \int_{B_1} \abs{w_n}^2 > 0,
\]
so that \(w \not \equiv 0\) on \(\R^N\).

By its weak convergence, the sequence $(w_n)_{n \in \N}$ also satisfie
\begin{equation*}
\int_{\R^N}\bigl(\abs{\nabla w}^2+\abs{w}^2\bigr)=\lim_{n \to \infty} \int_{\R^N}\bigl(\abs{\nabla w_n}^2+\abs{w_n}^2\bigr)-\int_{\R^N}\bigl(\abs{\nabla (w_n - w)}^2+\abs{w_n - w}^2\bigr); 
\end{equation*}
Since the sequence \((w_n)_{n \in \N}\) is bounded in \(H^1 (\R^N)\), we have, by the Sobolev embedding theorem and by \cref{lemmaBrezisLieb}
\begin{equation*}
\int_{\R^N}(V*\abs{w}^p)\abs{w}^p\ge\limsup_{n \to \infty} \int_{\R^N}\bigl(V*\abs{w_n}^p\bigr)\abs{w_n}^p-\int_{\R^N}(V*\abs{w_n - w}^p)\abs{w_n - w}^p.
\end{equation*}
By homogeneity, any function $v\in H^1\bigl(\R^N\bigr)\setminus\{0\}$ verifies
$$
\frac{\displaystyle \int_{\R^N}(V_\lambda*\abs{v}^p)\abs{v}^p}{\left(\displaystyle\int_{\R^N}\bigl(\abs{\nabla v}^2+\abs{v}^2\bigr) \right)^p}\le a_\lambda.
$$
Therefore,
\begin{equation}
\begin{split}
\label{nonvan1}
\frac{\int_{\R^N}(V_\lambda *\abs{w}^p)\abs{w}^p}{\left(\int_{\R^N}\bigl(\abs{\nabla w}^2+\abs{w}^2\bigr)\right)^p}
\ge&\limsup_{n \to \infty} \frac{\int_{\R^N}\bigl(V_\lambda *\abs{w_n}^p\bigr)\abs{w_n}^p-\int_{\R^N}(V_\lambda *\abs{w_n - w}^p)\abs{w_n - w}^p}{\left(\int_{\R^N}\bigl(\abs{\nabla w}^2+\abs{w}^2\bigr)\right)^p}\\
=&\limsup_{n \to \infty} \frac{\int_{\R^N}\bigl(V_\lambda *\abs{w_n}^p\bigr)\abs{w_n}^p}{\left(\int_{\R^N}\left(\abs{\nabla w_n}^2+\abs{w_n}^2\right)\right)^p}\left(\frac{\int_{\R^N}\left(\abs{\nabla w_n}^2+\abs{w_n}^2\right)}{\int_{\R^N}(\abs{\nabla w}^2+\abs{w}^2)}\right)^p\\
& \qquad \qquad-\frac{\int_{\R^N}(V_\lambda *\abs{w_n - w}^p)\abs{w_n - w}^p}{\left(\int_{\R^N}\left(\abs{\nabla (w_n - w)}^2+\abs{w_n - w}^2\right)\right)^p}\\
&\qquad \qquad \quad \qquad \times\left(\frac{\int_{\R^N}(\abs{\nabla (w_n - w)}^2+\abs{w_n - w}^2)}{\int_{\R^N}(\abs{\nabla w}^2+\abs{w}^2)}\right)^p\\
\end{split}
\end{equation}
By the optimizing character of the sequence \((w_n)_{n \in \N}\) and by definition of \(a_\lambda\), we have
\begin{equation}
\begin{split}
\label{nonvan2}
\frac{\int_{\R^N}(V_\lambda *\abs{w}^p)\abs{w}^p}{\left(\int_{\R^N}\bigl(\abs{\nabla w}^2+\abs{w}^2\bigr)\right)^p}
\ge &a_\lambda \limsup_{n \to \infty} \Biggl(\left(\frac{\int_{\R^N}\left(\abs{\nabla w_n}^2+\abs{w_n}^2\right)}{\int_{\R^N}(\abs{\nabla w}^2+\abs{w}^2)}\right)^p\\
&\quad\qquad \qquad-\left(\frac{\int_{\R^N}(\abs{\nabla (w_n - w)}^2+\abs{w_n - w}^2)}{\int_{\R^N}(\abs{\nabla w}^2+\abs{w}^2)}\right)^p\Biggr)\\
=&a_\lambda \limsup_{n \to \infty} \Biggl(\left(\frac{\int_{\R^N}(\abs{\nabla (w_n - w)}^2+\abs{w_n - w}^2)}{\int_{\R^N}(\abs{\nabla w}^2+\abs{w}^2)}+1\right)^p\\
&\qquad \qquad \qquad -\left(\frac{\int_{\R^N}(\abs{\nabla (w_n - w)}^2+\abs{w_n - w}^2)}{\int_{\R^N}(\abs{\nabla w}^2+\abs{w}^2)}\right)^p\Biggr)\\
\ge& a_\lambda.
\end{split}
\end{equation}
It follows then that the function $u_\lambda = w/\sqrt{\int_{\R^N}(\abs{\nabla w}^2+\abs{w}^2)}$ satisfies the required properties.
\end{proof}

\begin{remark}
In the proof of \cref{lemmaRelaxed}, the last chain of inequalities \eqref{nonvan1}--\eqref{nonvan2} must actually be a chain of equalities, and in particular
$$
\limsup_{n\to+\infty}\int_{\R^N}\bigl(\abs{\nabla w_n}^2+\abs{w_n}^2\bigr) = \int_{\R^N}\bigl(\abs{\nabla w}^2+\abs{w}^2\bigr),
$$
so that the sequence \((w_n)_{n \in \N}\) converges strongly to \(w\) in $H^1\bigl(\R^N\bigr)$ and the function $w$ itself attains $a_\lambda$.
\end{remark}

% We are now in position to prove \cref{theoremRelaxedGroundstate}.
% 
% \begin{proof}[Proof of \cref{theoremRelaxedGroundstate}]
% Let $w$ be given by \cref{lemmaRelaxed}. Then, by the Lagrange multipliers' principle, \(w\) solves the equation
% $$a_\lambda(-\Delta w+w)=\bigl(V_\lambda*\abs{w}^p\bigr)\abs{u}^{p-2}w,$$
% hence the function $u:=a_\lambda^{-\frac{1}{2p-2}}w$ solves \eqref{equationChoquard}.
% Let us show that $u$ is a groundstate. Its energy value is
% $$
% \mathcal{I} (u)
% =\frac{a_\lambda^{-\frac{1}{p-1}}}2\int_{\R^N}\bigl(\abs{\nabla w}^2+\abs{w}^2\bigr)-\frac{a_\lambda^{-\frac{p}{p-1}}}{2p}\int_{\R^N}(V_\lambda *\abs{w}^p)\abs{w}^p
% =\left(\frac{1}{2}-\frac{1}{2p}\right) a_\lambda^{-\frac{1}{2p-2}}=b_\lambda.
% $$
% Therefore, it suffices to show that any solution $v \not \equiv 0$ of \eqref{problemRelaxed} satisfies $\mathcal{I} (v)\ge b$. To see this, we evaluate $\mathcal{I} (tv)$ for $t>0$: by testing \eqref{equationChoquard} with $v$ we get $\int_{\R^N}\left(\abs{\nabla v}^2+\abs{v}^2\right)=\int_{\R^N}(V*\abs{v}^p)\abs{v}^p$, hence
% \[
% \mathcal{I} (tv)=\left(\frac{t^2}2-\frac{t^{2p}}{2p}\right)\int_{\R^N}\bigl(\abs{\nabla v}^2+\abs{v}^2\bigr),
% \]
% which has its maximum in $t=1$. This allows us to conclude:
% \[
% \mathcal{I} (v)=\sup_{t>0}\mathcal{I} (tv)\ge\inf_{u\not\equiv0}\sup_{t>0}\mathcal{I} (tw)=b_\lambda.\qedhere
% \]
% \end{proof}

\section{Groundstates for the Choquard equation}
\label{sectionGroundState}
This section is devoted to the proof of \cref{theoremMainGroundstate} about the existence of groundstates for the Choquard equation \eqref{equationChoquard}.

The proof of \cref{theoremMainGroundstate} will use the fact that coarsely continuous functions are large-scale Lipschitz-continuous.

\begin{lemma}\label{sr}
If \(f : \R^N \to \R\), then for every \(x, y \in \R^N\),
\[
 \abs{f (x) - f (y)} 
 \le (\abs{x - y} + 1) \,\sup\, \bigl\{ \abs{f (z) - f (w)} \st z, w \in \R^N \text{ and } \abs{z - w} \le 1\bigr\}.
\]
\end{lemma}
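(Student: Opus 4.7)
The plan is to subdivide the segment joining $x$ and $y$ into a chain of points whose consecutive distances do not exceed $1$, and then to apply the triangle inequality together with the uniform bound coming from the coarse-continuity supremum.

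Denote by $M$ the supremum appearing in the statement. If $x = y$ or $M = +\infty$, the inequality is trivial, so assume $x \ne y$ and $M < +\infty$. Let $k := \lceil \abs{x - y} \rceil \in \N$, which satisfies simultaneously $k \ge \abs{x - y}$ (so that the step size will be at most $1$) and $k \le \abs{x - y} + 1$ (so that the factor in front of $M$ is of the right size). Define $x_j := x + \frac{j}{k}(y - x)$ for $j = 0, 1, \dotsc, k$, so that $x_0 = x$, $x_k = y$, and
\[
  \abs{x_{j+1} - x_j} = \frac{\abs{x - y}}{k} \le 1 \qquad \text{for every } j \in \{0, \dotsc, k-1\}.
\]

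A telescoping sum followed by the triangle inequality then gives
\[
  \abs{f(x) - f(y)} \le \sum_{j = 0}^{k - 1} \abs{f(x_{j+1}) - f(x_j)} \le k M \le (\abs{x - y} + 1)\, M,
\]
which is precisely the asserted bound. No step here is delicate; the only minor subtlety is being careful with the rounding so that $k$ is at least $1$ and bounded above by $\abs{x - y} + 1$, which is exactly what $k = \lceil \abs{x - y} \rceil$ achieves when $x \ne y$.
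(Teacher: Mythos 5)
Your proof is correct and follows essentially the same route as the paper: both arguments chain the points $x$ and $y$ through intermediate points at mutual distance at most $1$ and conclude by the triangle inequality, with the number of steps bounded by $\abs{x-y}+1$. Your version merely makes the construction of the chain explicit (equally spaced points with $k=\lceil\abs{x-y}\rceil$), which is a welcome precision but not a different argument.
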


\begin{proof}
We take points $x_0, \dotsc,x_{\ell}$, with \(x_0 = x\), \(x_\ell = y\), \(\ell \le |x-y| + 1\), \( \abs{x_i-x_{i+1}} \le 1$ for $i \in \{0, \dotsc, \ell - 1\}$, and we estimate, by the triangle inequality,
\begin{equation}
\begin{split}
    \abs{f (x) - f (y)} 
  \le &
    \sum_{i=0}^{\ell-1}\abs{f(x_i)-f(x_{i+1})}\\
  \le 
    &\Bigl(\sum_{i=0}^{\ell-1}|x_i-x_{i+1}|\Bigr) 
      \,\sup\, \bigl\{ \abs{f (z) - f (w)} \st z, w \in \R^N \text{ and } \abs{z - w} \le 1\bigr\}.\\
  \le &
    (\abs{x - y}+1) 
    \,\sup\, \bigl\{ \abs{f (z) - f (w)} \st z, w \in \R^N \text{ and } \abs{z - w} \le 1\bigr\}. \qedhere
\end{split}
\end{equation}
\end{proof}

\Cref{definitionWeak} of weak solutions does not allow a priori to test the equation against against a solution. We show that the resulting formula still holds however. 

\begin{lemma}[Testing the equation against a solution]
\label{lemmaTest}
If \(p \ge 2\), if \(V \in L^q (\R^N)\) with \( \frac{1}{p}(1 - \frac{1}{2q}) \ge \frac{1}{2} - \frac{1}{N}\) and if \(u \in H^1 (\R^N)\) is a weak solution to the Choquard equation \eqref{equationChoquard}, then
\(\int_{\R^N} \bigl(\abs{V} \ast \abs{u}^p\bigr)\, \abs{u}^p < + \infty\) and 
\[
 \int_{\R^N} (V \ast \abs{u}^p) \abs{u}^p = 
 \int_{\R^N} \bigl( \abs{\nabla u}^2 + \abs{u}^2 \bigr).
\]
\end{lemma}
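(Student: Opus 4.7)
The plan is to test the equation against a cut-off of $u$ itself and pass to the limit. Let $\eta_R \in C^\infty_c(\R^N)$ be a standard bump with $\eta_R = 1$ on $B_R$, supported in $B_{2R}$, satisfying $0 \le \eta_R \le 1$ and $\norm{\nabla \eta_R}_{L^\infty} \le C/R$. Since $\eta_R u \in H^1(\R^N)$ has compact support, it is an admissible test function in \cref{definitionWeak}, yielding
\[
  \int_{\R^N} \nabla u \cdot \nabla(\eta_R u) + u\,(\eta_R u)
  = \int_{\R^N} (V \ast \abs{u}^p)\, \abs{u}^p\, \eta_R,
\]
where I have used $\abs{u}^{p-2}u \cdot u = \abs{u}^p$ on the right-hand side.

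On the left-hand side, expanding $\nabla(\eta_R u) = \eta_R \nabla u + u\nabla \eta_R$ and applying the Cauchy--Schwarz inequality, the cross term is bounded by $(C/R)\norm{u}_{L^2}\norm{\nabla u}_{L^2}$ and tends to $0$, while dominated convergence yields $\int_{\R^N} \eta_R(\abs{\nabla u}^2 + u^2) \to \int_{\R^N}(\abs{\nabla u}^2 + u^2)$. On the right-hand side, I would split $V = V^+ - V^-$. For the $V^+$ contribution, assumption \eqref{V1}, together with Young's convolution inequality (\cref{young}) and the Sobolev embedding exactly as in the proof of \cref{lemma_Relaxed_Functional_Well_Defined}, gives $(V^+ \ast \abs{u}^p)\abs{u}^p \in L^1(\R^N)$, and dominated convergence produces a finite limit for $\int_{\R^N}\eta_R (V^+ \ast \abs{u}^p)\abs{u}^p$.

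For the $V^-$ contribution I would exploit its sign: since $(V^- \ast \abs{u}^p)\abs{u}^p \ge 0$ and $\eta_R \nearrow 1$, the monotone convergence theorem gives
\[
  \int_{\R^N} \eta_R\, (V^- \ast \abs{u}^p)\,\abs{u}^p
  \;\nearrow\; \int_{\R^N} (V^- \ast \abs{u}^p)\,\abs{u}^p
  \;\in\; [0, +\infty].
\]
Since the tested equation now shows that this monotone limit equals the finite value $\int_{\R^N}(\abs{\nabla u}^2 + u^2)$ minus the finite $V^+$ limit, the $V^-$ limit must itself be finite. This simultaneously yields $\int_{\R^N}(\abs{V}\ast\abs{u}^p)\abs{u}^p < +\infty$ and, after subtracting the $V^+$ and $V^-$ limits, the stated identity.

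The only real obstacle is justifying the finiteness of the $V^-$ term: a priori $(V^- \ast \abs{u}^p)\abs{u}^p$ could fail to be integrable (\eqref{V2} together with \cref{sr} only tells us that $V^-$ has at most linear growth, which is just enough to make $V^- \ast \abs{u}^p$ a well-defined element of $[0,+\infty]$). The whole point of the argument is that monotone convergence lets us trade this a priori lack of integrability against the finiteness forced upon us by the equation, and this is why testing against $u$ works in spite of $u$ being non-compactly-supported.
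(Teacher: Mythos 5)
Your proof is correct and follows essentially the same route as the paper: test the equation against \(\eta_R u\), kill the cross term by Cauchy--Schwarz, handle the \(V^+\) contribution via Young's convolution inequality and dominated convergence, and use monotone convergence on the nonnegative \(V^-\) term to trade its a priori non-integrability against the finiteness forced by the equation. The only cosmetic point is that for \(\eta_R \nearrow 1\) you should choose the cut-off radially monotone (the paper requires \(t \mapsto \eta(tx)\) nonincreasing for exactly this reason), though Fatou's lemma combined with \(0 \le \eta_R \le 1\) would serve equally well.
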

\begin{proof}
We consider a function \(\eta \in C^2_c (\R^N)\) such that \(\eta = 1\) on \(B_1\), \(0 \le \eta \le 1\) on \(\R^N\) and for each \(x \in \R^N\) the function \(t \in [0, +\infty) \mapsto \eta (t x)\) is nonincreasing,
and we define for each \(R > 0\) the function \(\eta_R : \R^N \to \R\) for each \(x \in \R^N\) by \(\eta_R (x) = \eta (x/R)\). 
Since \(u \in H^1 (\R^N)\) and \(\eta_R\) has compact support, the function
\(\eta_R u\) is an admissible test function of the weak formulation of the Choquard equation (\cref{definitionWeak}). Hence, we have 
\[
 \int_{\R^N} \bigl( \eta_R \abs{\nabla u}^2
 + \eta_R \abs{u}^2 + u \nabla u \cdot \nabla \eta_R \bigr)
 = \int_{\R^N} (V \ast \abs{u}^p) \abs{u}^p \eta_R.
\]
Therefore, since \(\eta_R \le 1\), since \(\abs{\nabla \eta_R} \le \norm{\nabla \eta}_{L^\infty}/R\) and since, by combining our assumption with the classical Sobolev embedding, we have \(u \in L^{\frac{2 p q}{2 q - 1}}(\R^N)\) so that by Young's convolution inequality (\cref{young}) we have \(\int_{\R^N}  (V^+ \ast \abs{u}^p) \abs{u}^p < + \infty\),  we obtain by Lebesgue's dominated convergence theorem that 
\begin{multline*}
\lim_{R \to \infty}
 \int_{\R^N} \bigl( \eta_R \abs{\nabla u}^2
 + \eta_R \abs{u}^2 + u \nabla u \cdot \nabla \eta_R \bigr)
 -  \int_{\R^N} (V^+ \ast \abs{u}^p) \abs{u}^p \eta_R\\
 = \int_{\R^N} \bigl( \abs{\nabla u}^2 + \abs{u}^2 \bigr)-  \int_{\R^N} (V^+ \ast \abs{u}^p) \abs{u}^p .
\end{multline*}
Hence by Lebesgue's monotone convergence theorem 
\[
\begin{split}
  \int_{\R^N} (V^- \ast \abs{u}^p) \abs{u}^p
  &= \lim_{R \to \infty} 
  \int_{\R^N} (V^- \ast \abs{u}^p) \abs{u}^p \eta_R\\
  &= \int_{\R^N} (V^+ \ast \abs{u}^p) \abs{u}^p  - \int_{\R^N} \bigl( \abs{\nabla u}^2 + \abs{u}^2 \bigr) < + \infty.
\end{split}
\]
The conclusions then follow.
\end{proof}

\begin{proof}[Proof of \cref{theoremMainGroundstate}]
\resetconstant
\resetclaim
By \cref{lemmaRelaxed}, for every \(\lambda > 0\), there exists a function \(w_\lambda \in H^1 (\R^N)\) such that 
\begin{align*}
 \int_{\R^N}\bigl( \abs{\nabla w_\lambda}^2 + \abs{w_\lambda}^2 \bigr)&= 1
 &\text{ and }&
 & \int_{\R^N} \bigl(V_\lambda \ast \abs{w_\lambda}^p) \abs{w_\lambda}^p & = 
 a_\lambda,
\end{align*}
where \(a_\lambda\) was defined in \eqref{eqDef_a_lambda}.
%TODO: Warning about minimizing the square norm or the quotient.

\begin{claim}
\label{claim_a_lambda}
For every \(\lambda \in [0, +\infty)\), we have 
\[
 0 < \lim_{\mu \to \infty} a_\mu \le a_\lambda \le a_0< + \infty.
\]
\end{claim}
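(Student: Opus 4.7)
The plan is to deduce the four inequalities from two ingredients: the monotonicity of $\lambda \mapsto a_\lambda$ and the test function furnished by \eqref{V3}.

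First I would observe that $\lambda \mapsto a_\lambda$ is nonincreasing. Indeed, if $\mu \ge \lambda$ then $V_\mu \le V_\lambda$ pointwise on $\R^N$, so the nonnegativity of $\abs{u(x)}^p \abs{u(y)}^p$ yields $\int_{\R^N} (V_\mu \ast \abs{u}^p) \abs{u}^p \le \int_{\R^N} (V_\lambda \ast \abs{u}^p) \abs{u}^p$ for every $u \in H^1(\R^N)$. Taking the supremum over the $H^1$-unit sphere gives $a_\mu \le a_\lambda$, which yields the two middle inequalities $\lim_{\mu \to \infty} a_\mu \le a_\lambda \le a_0$. The finiteness $a_0 < +\infty$ then follows by applying Young's convolution inequality (\cref{young}) to $V_0 = V^+ \in L^q(\R^N)$ and $\abs{u}^p$ and controlling the resulting $L^{2pq/(2q-1)}$-norm of $u$ by $\norm{u}_{H^1}$ via the Sobolev embedding, exactly as in the proof of \cref{lemma_Relaxed_Functional_Well_Defined}.

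The nontrivial step is the strict positivity $\lim_{\mu \to \infty} a_\mu > 0$, for which I would use \eqref{V3}. Pick $\varphi \in C^1_c(\R^N)$ with $\int_{\R^N}(V \ast \abs{\varphi}^p)\abs{\varphi}^p > 0$ and let $K$ denote its support. Rewriting the integral as $\iint V(x-y) \abs{\varphi(x)}^p \abs{\varphi(y)}^p \dif x \dif y$ reveals that only the values of $V$ on the compact set $K - K$ intervene. By \eqref{V2} together with \cref{sr}, the function $V^-$ has at most linear growth, hence is bounded on $K - K$ by some $\mu_0$. For every $\mu \ge \mu_0$ we thus have $V_\mu = V$ on $K - K$, giving $\int_{\R^N}(V_\mu \ast \abs{\varphi}^p)\abs{\varphi}^p = \int_{\R^N}(V \ast \abs{\varphi}^p)\abs{\varphi}^p$. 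Inserting $\varphi/\norm{\varphi}_{H^1}$ in the definition of $a_\mu$ then yields $a_\mu \ge c := \norm{\varphi}_{H^1}^{-2p}\int_{\R^N}(V \ast \abs{\varphi}^p)\abs{\varphi}^p > 0$ for $\mu \ge \mu_0$, and the monotonicity established above propagates the bound to the limit $\mu \to \infty$.

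The only delicate point is the transfer of positivity from $V$ to $V_\mu$, which relies on $V^-$ being bounded on $K - K$; this is precisely where the coarse-continuity assumption \eqref{V2} (via \cref{sr}) is needed. Everything else is bookkeeping with the pointwise monotonicity of the truncation.
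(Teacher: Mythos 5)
Your proof is correct and follows the same skeleton as the paper's: monotonicity of $\lambda \mapsto a_\lambda$ from the pointwise monotonicity of the truncation, finiteness of $a_0$ from Young's inequality and the Sobolev embedding (note $V_0 = V^+$), and a $\lambda$-independent positive lower bound coming from the test function supplied by \eqref{V3}. The one place where you genuinely diverge is the positivity step, and there your argument is heavier than it needs to be: you invoke \eqref{V2} and \cref{sr} to show that $V^-$ is bounded on the compact set $K-K$, so that $V_\mu = V$ there for all $\mu \ge \mu_0$, and then you propagate the bound to small $\mu$ by monotonicity. The paper instead uses only the pointwise inequality $V_\mu \ge V$, valid for every $\mu \ge 0$, which together with the nonnegativity of $\abs{\varphi(x)}^p\abs{\varphi(y)}^p$ gives $\int_{\R^N}(V_\mu \ast \abs{\varphi}^p)\abs{\varphi}^p \ge \int_{\R^N}(V \ast \abs{\varphi}^p)\abs{\varphi}^p > 0$ directly, with no recourse to coarse continuity. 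Your detour is not wrong --- and it has the minor virtue of producing exact equality of the two integrals on $K-K$ rather than an inequality, which sidesteps any worry about splitting a possibly ill-defined integral --- but it makes the claim appear to depend on \eqref{V2} when it does not. (For what it is worth, the paper's own proof also cites \eqref{V4} at this point, which likewise seems unnecessary; the only structural assumption genuinely used for positivity is \eqref{V3}.)
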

\begin{proofclaim}
We observe that if \(\lambda_1 \le \lambda_2\), then by the definition of the relaxed potential \(V_\lambda\) in \eqref{eqDef_relaxed_potential}, we have \(V_{\lambda_1} \ge V_{\lambda_2}\) and thus by the definition of \(a_\lambda\) in \eqref{eqDef_a_lambda}, we have 
\(a_{\lambda_1} \ge a_{\lambda_2}\). In particular, we have \(a_\lambda \le a_0\).

Moreover by definition of \(V_\lambda\) in \eqref{eqDef_relaxed_potential} again, we have for every \(\lambda >0\) and \(\varphi \in C^1_c (\R^N)\).
\[
 a_\lambda \ge \frac{\displaystyle \int_{\R^N} \bigl(V_\lambda \ast \abs{\varphi}^p\bigr) \abs{\varphi}^p}
 {\biggl( \displaystyle \int_{\R^N} \bigl( \abs{\nabla \varphi}^2 + \abs{\varphi}^2\bigr) \biggr)^p}
 \ge \frac{\displaystyle \int_{\R^N} \bigl(V_\lambda \ast \abs{\varphi}^p\bigr) \abs{\varphi}^p}
 {\biggl( \displaystyle \int_{\R^N}\bigl( \abs{\nabla \varphi}^2 + \abs{\varphi}^2 \bigr) \biggr)^p} > 0.
\]
By the assumptions \eqref{V3} and \eqref{V4}, the right-hand side can be chosen to be positive and independent of \(\lambda\), so that the conclusion follows. 
\end{proofclaim}

\begin{claim}
\label{claimCompactness}
There exists a sequence \((\lambda_n)_{n \in \N}\) in \([0, + \infty)\) such that \(\lim_{n \to \infty} \lambda_n = + \infty\), a sequence \((x_n)_{n \in \N}\) in \(\R^N\) and 
a function \(w \in H^1 (\R^N)\) such that if 
\(
 w_n = w_{\lambda_n} (\cdot + x_n)
\),
then 
\begin{enumerate}[(1)]
 \item  \label{claimNontrivial} \(w \not \equiv 0\) in \(\R^N\),
 \item \label{claimWeak} the sequence \((w_n)_{n \in \N}\) converges weakly to \(w\) in \(H^1 (\R^N)\),
 \item \label{claim_a_e} the sequence \((w_n)_{n \in \N}\) converges almost everywhere to \(w\) in \(\R^N\),
 \item \label{claim_glob} the sequence \((w_n)_{n \in \N}\) converges strongly to \(w\) in \(L^p (\R^N)\),
 \item \label{claim_ConvergenceEquation} there exists a constant \(\mu \in [0, + \infty)\) such that for every \(R >0\),
 the sequence \((V_{\lambda_n}^- \ast \abs{w_n}^p)_{n \in \N}\) converges to 
 \((V^- \ast \abs{w}^p) + \mu\) in \(L^\infty (B_R)\).
\end{enumerate}
\end{claim}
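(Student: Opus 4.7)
\medskip

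\noindent The plan is to extract a good translation of the relaxed maximizers $w_{\lambda_n}$ for some sequence $\lambda_n \to +\infty$ and then to exploit the growth assumption \eqref{V4} in order to prevent mass escape. Items (1)--(3) will follow from the concentration-compactness argument already carried out in the proof of \cref{lemmaRelaxed}; the main obstacle will be item (4), the global strong $L^p$ convergence, which is exactly where \eqref{V4} enters in an essential way. Item (5) will then follow from the coarse continuity of $V^-$ provided by \cref{sr}.

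For items (1)--(3), I would fix any sequence $\lambda_n \to +\infty$ and start from the inequality
\[
\int_{\R^N}(V^+ \ast \abs{w_{\lambda_n}}^p)\abs{w_{\lambda_n}}^p \geq a_{\lambda_n} \geq \lim_{\mu\to\infty} a_\mu > 0 ,
\]
obtained from \cref{claim_a_lambda} and the trivial bound $\int(V_{\lambda_n}^- \ast \abs{w_{\lambda_n}}^p)\abs{w_{\lambda_n}}^p \geq 0$. The Lions argument from the proof of \cref{lemmaRelaxed} then produces $x_n \in \R^N$ with $\int_{B_1(x_n)}\abs{w_{\lambda_n}}^{2pq/(2q-1)}$ uniformly bounded below, and setting $w_n := w_{\lambda_n}(\cdot+x_n)$, the standard extraction using weak compactness in $H^1$, the Rellich--Kondrashov compact embedding and a diagonal argument yields a subsequence converging to some $w \not\equiv 0$ weakly in $H^1$, almost everywhere and strongly in $L^s_{\mathrm{loc}}(\R^N)$ for appropriate $s$.

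Both remaining items will rest on the uniform bound
\[
0 \leq \int_{\R^N}(V_{\lambda_n}^- \ast \abs{w_n}^p)\abs{w_n}^p = \int_{\R^N}(V^+ \ast \abs{w_n}^p)\abs{w_n}^p - a_{\lambda_n} \leq C ,
\]
coming from Young's convolution inequality and the Sobolev embedding. For item (4), I would argue by contradiction: if $w_n \not\to w$ in $L^p(\R^N)$, the classical Brezis--Lieb lemma gives $\liminf\|w_n-w\|_{L^p}^p > 0$, a further Lions concentration argument yields points $y_n$ with $\int_{B_1(y_n)}\abs{w_n}^p \geq c_2 > 0$, and the strong $L^p_{\mathrm{loc}}$ convergence forces $\abs{y_n} \to +\infty$. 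Fixing a ball $B_\rho$ in which $\int_{B_\rho}\abs{w_n}^p \geq c_3 > 0$ for $n$ large, \cref{sr} applied to $V_{\lambda_n}^-$ (which has the same coarse continuity constant as $V^-$) gives, for $z \in B_1(y_n)$,
\[
V_{\lambda_n}^- \ast \abs{w_n}^p(z) \geq \bigl(V_{\lambda_n}^-(y_n)-K(\rho+2)\bigr) c_3 .
\]
Since $\lambda_n \to +\infty$ and \eqref{V4} imply $V_{\lambda_n}^-(y_n) = \min(V^-(y_n),\lambda_n) \to +\infty$, integrating this against $\abs{w_n}^p$ over $B_1(y_n)$ contradicts the uniform bound above.

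Finally, for item (5), I would set $g_n(x) := V_{\lambda_n}^- \ast \abs{w_n}^p(x) - V_{\lambda_n}^- \ast \abs{w_n}^p(0)$. \Cref{sr} implies both that $V^-$ is locally bounded (applied with one point equal to $0$) and that the integrand defining $g_n$ is dominated by $K(\abs{x}+1)$; consequently the truncation $V_{\lambda_n}^-$ coincides with $V^-$ on any fixed ball $B_{R+M}$ for $n$ large. Combining this with the tightness of $(\abs{w_n}^p)_{n \in \N}$ coming from item (4) and the strong $L^p$ convergence yields $g_n \to V^- \ast \abs{w}^p - (V^- \ast \abs{w}^p)(0)$ uniformly on $B_R$. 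The energy bound together with the lower bound $\int_{B_\rho}\abs{w_n}^p \geq c_3$ and \cref{sr} then give a uniform upper bound on $V_{\lambda_n}^- \ast \abs{w_n}^p(0)$, and a further subsequence converges to some $\mu^* \in [0,+\infty)$; by Fatou's lemma applied to $V_{\lambda_n}^-(-\cdot)\abs{w_n}^p$, one has $\mu^* \geq (V^- \ast \abs{w}^p)(0)$, so that $\mu := \mu^* - (V^- \ast \abs{w}^p)(0) \geq 0$ satisfies the required convergence.
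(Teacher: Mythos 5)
Your treatment of items (1)--(3) and (5) follows the paper's route (Young plus Lions to locate a concentration point, Rellich for the local limits; for (5), boundedness of $\bigl(V_{\lambda_n}^-\ast\abs{w_n}^p\bigr)$ at one point, \cref{sr} to propagate it, and Fatou to get $\mu\ge 0$). The one inaccuracy in (5) is the assertion that the truncation $V_{\lambda_n}^-$ ``coincides with $V^-$ on any fixed ball'': the convolution $(V_{\lambda_n}^-\ast\abs{w_n}^p)(x)$ samples $V_{\lambda_n}^-$ on all of $\R^N$, so this coincidence is irrelevant; what actually closes the argument is the uniform bound $\bigabs{V_{\lambda_n}^-(x-y)-V_{\lambda_n}^-(-y)}\le K(\abs{x}+1)$ (the truncation being $1$-Lipschitz as a function of $V^-$) together with monotone convergence $V_{\lambda_n}^-\ast\abs{w}^p\to V^-\ast\abs{w}^p$, which is how the paper proceeds and which your surrounding argument essentially contains.

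The genuine gap is in item (4). You deduce the existence of concentration points $y_n$ with $\int_{B_1(y_n)}\abs{w_n}^p\ge c_2>0$ from the failure of strong $L^p(\R^N)$ convergence via ``a further Lions concentration argument''. The Lions-type inequality
\[
\int_{\R^N}\abs{v}^p\le C\int_{\R^N}\bigl(\abs{\nabla v}^2+\abs{v}^2\bigr)\Bigl(\sup_{y\in\R^N}\int_{B_1(y)}\abs{v}^p\Bigr)^{1-\frac{2}{p}}
\]
has exponent $1-\frac{2}{p}=0$ when $p=2$, so it yields nothing: a bounded sequence in $H^1$ can have $\norm{v_n}_{L^2}$ bounded away from zero while $\sup_y\int_{B_1(y)}\abs{v_n}^2\to 0$ (take $v_n=n^{-N/2}\varphi(\cdot/n)$). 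Since $p=2$ is precisely the case covering the low-dimensional Newton kernels that motivate the theorem, your contradiction argument does not go through there: the escaping $L^p$ mass need not concentrate in unit balls, so no points $y_n$ are produced and no contradiction with the bound on $\int(V_{\lambda_n}^-\ast\abs{w_n}^p)\abs{w_n}^p$ is reached. The paper avoids this entirely by estimating the tail directly rather than by contradiction: from
\[
\int_{\R^N}\bigl(V_{\lambda_n}^-\ast\abs{w_n}^p\bigr)\abs{w_n}^p
\ge\Bigl(\inf_{\R^N\setminus B_{R+1}}V_{\lambda_n}^-\Bigr)\Bigl(\int_{\R^N\setminus B_R}\abs{w_n}^p\Bigr)\Bigl(\int_{B_1}\abs{w_n}^p\Bigr),
\]
the uniform bound on the left-hand side, the lower bound on $\int_{B_1}\abs{w_n}^p$ from item (1), and \eqref{V4} give $\lim_{R\to\infty}\limsup_n\int_{\R^N\setminus B_R}\abs{w_n}^p=0$ regardless of whether the tail mass concentrates or spreads. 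Your argument for $p>2$ (strictly subcritical) is fine, but you should replace the concentration-point step by this direct tail estimate to cover $p=2$.
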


\begin{proofclaim}

By Young's convolution inequality (\cref{young}), we have
\begin{equation}
\label{eq_Lions2_Young}
\int_{\R^N}\abs{w_\lambda}^\frac{2pq}{2q-1}
\ge \frac{\displaystyle \biggl( \int_{\R^N}
\bigl(V^+ \ast \abs{w_\lambda}^p\bigr) \abs{w_\lambda}^p\biggr)^\frac{q}{2q-1}}{\displaystyle \Bigl(\int_{\R^N} \abs{V^+}^q\Bigr)^\frac{1}{2 q - 1} }
\ge C a_\lambda^\frac{q}{2 q - 1}. 
\end{equation}
By an inequality of P.-L. Lions \cite{lio}*{Lemma I.2} (see also \citelist{\cite{Willem1996}*{lemma 1.21}\cite{mv13}*{lemma 2.3}\cite{VanSchaftingen2014}*{(2.4)}}), we have on the one hand 
\begin{equation}
\label{eq_Lions2}
\begin{split}
\int_{\R^N}\abs{w_\lambda}^\frac{2pq}{2q-1}
&\le \Cl{Clions2} \int_{\R^N}\bigl(\abs{\nabla w_\lambda}^2+\abs{w_\lambda}^2\bigr) \;\left(\sup_{x\in\R^N}\int_{B_1(x)}\abs{w_\lambda}^\frac{2pq}{2q-1}\right)^{1-\frac{1}{p} (2 - \frac{1}{q})}\\
&=\Cr{Clions2} \left(\sup_{x\in\R^N}\int_{B_1(x)}\abs{w_\lambda}^\frac{2pq}{2q-1}\right)^{1-\frac{1}{p} (2 - \frac{1}{q})}
.
\end{split}
\end{equation}
By combining \eqref{eq_Lions2_Young} and \eqref{eq_Lions2}, we deduce that there exists a family \((x_{\lambda})_{\lambda \ge 0}\) of points in \(\R^N\) such that 
\begin{equation}
\label{eq_Lions2_conclusion}
  \Cr{Clions2} \liminf_{\lambda \to \infty} \left(\int_{B_1(x_\lambda)}\abs{w_\lambda}^\frac{2pq}{2q-1}\right)^{1-\frac{1}{p} (2 - \frac{1}{q})}
  \ge \liminf_{\lambda \to \infty} a_\lambda^\frac{q}{2 q - 1} > 0,
\end{equation}
by \cref{claim_a_lambda}.

There exists thus a sequence \((\lambda_n)_{n \in \N}\) in \([0, + \infty)\) such that \(\lim_{n \to \infty} \lambda_n= + \infty\) and the sequence \((w_{\lambda_n} (\cdot + x_{\lambda_n}))_{n \in \N}\) converges weakly to some function \(w \in H^1 (\R^N)\). This proves \eqref{claimWeak}.
By Rellich's compactness theorem, it follows that the sequence \((w_n)_{n \in \N}\) converges to \(w\) strongly in \(L^r_{\mathrm{loc}} (\R^N)\) whenever \(\frac{1}{r} > \frac{1}{2} - \frac{1}{N}\).
From the convergence in \(L^{2 pq/(2 q - 1)}_{\mathrm{loc}} (\R^N)\), we deduce in view of \eqref{eq_Lions2_conclusion} that 
\begin{equation}
\label{eq_dpxsar}
 \int_{B_1} \abs{w}^\frac{2 pq}{2 q - 1}
 = \lim_{n \to \infty} \int_{B_1} \abs{w_n}^\frac{2 pq}{2 q - 1}
 = \lim_{n \to \infty} \int_{B_1(x_{\lambda_n})}\abs{w_{\lambda_n}}^\frac{2pq}{2q-1} > 0,
\end{equation}
and \eqref{claimNontrivial} follows.
The assertion \eqref{claim_a_e} then follows up to extraction of a subsequence from the strong convergence in \(L^2_{\mathrm{loc}} (\R^N)\). 

In order to prove \eqref{claim_glob}, 
we first note that 
\begin{equation}%
\label{eq_idspsxt}
\begin{split}
 \limsup_{n \to \infty} \int_{\R^N} \abs{w_n - w}^p
 &\le \limsup_{R \to \infty} 2^{p - 1} \limsup_{n \to \infty} \int_{\R^N \setminus B_R} \abs{w_n}^p 
 + \limsup_{R \to \infty} 2^{p - 1} \int_{\R^N \setminus B_R} \abs{w}^p\\
 &\qquad + \limsup_{R \to \infty} \limsup_{n \to \infty} \int_{B_R} \abs{w_n - w}^p.
\end{split}
\end{equation}
By Rellich's compactness theorem, we have for every \(R \in (0, +\infty)\),
\begin{equation}
\label{eq_idspsxt_1}
 \limsup_{n \to \infty} \int_{B_R} \abs{w_n - w}^p = 0.
\end{equation}
Next, we observe that, by Fatou's lemma,
\[
  \int_{\R^N} \abs{w}^p \le \liminf_{n \to \infty} \int_{\R^N} \abs{w_n}^p < + \infty, 
\]
so that by Lebesgue's dominated convergence theorem, it follows that 
\begin{equation}
 \label{eq_idspsxt_2}
  \limsup_{R \to \infty}  \int_{\R^N \setminus B_R} \abs{w}^p = 0.
\end{equation}
In order to conclude, we observe that for each \(n \in \N\), by definition of \(a_{\lambda_n}\) we have
\[
\begin{split}
\int_{\R^N} \bigl(V^+ \ast \abs{w_n}^p\bigr)\abs{w_n}^p
& \ge 
  \int_{\R^N} \bigl(V^+ \ast \abs{w_n}^p\bigr)
 \abs{w_n}^p - a_{\lambda_n}\\
 & = \int_{\R^N} \bigl(V_{\lambda_n}^- \ast \abs{w_n}^p\bigr)
 \abs{w_n}^p\\
 & \ge \int_{\R^N \setminus B_R} \biggl(\int_{B_1} V_{\lambda_n}^- (x - y) \abs{w_n (y)}^p \dif y\biggr) \abs{w_n (y)}^p \dif y \\
 & \ge \Bigl(\inf_{\R^N \setminus B_{R + 1}} V_{\lambda_n}^- \Bigr) \biggl(\int_{\R^N \setminus B_R} \abs{w_n}^p\biggr)
 \biggl(\int_{B_1} \abs{w_n}^p \biggr)
\end{split}
\]
The left-hand side is bounded in view of \eqref{V1} and by Young's convolution inequality (\cref{young}). Therefore, we have 
\[
 \limsup_{n \to \infty} \int_{\R^N \setminus B_R} \abs{w_n}^p 
 \le \frac{\C}{\bigl(\inf_{\R^N \setminus B_{R + 1}} V^-\bigr) \displaystyle\int_{B_1} \abs{w_n}^p}.
\]
By the assumption \eqref{V4} and by \eqref{eq_dpxsar}, we deduce that 
\begin{equation}
\label{eq_idspsxt_3}
 \lim_{R \to \infty} \limsup_{n \to \infty} \int_{\R^N \setminus B_R} \abs{w_n}^p 
 = 0.
\end{equation}
Therefore, in view of \eqref{eq_idspsxt}, \eqref{eq_idspsxt_1}, \eqref{eq_idspsxt_2} and \eqref{eq_idspsxt_3}, we have
\begin{equation}
 \limsup_{n \to \infty} \int_{\R^N} \abs{w_n - w}^p
 = 0.
\end{equation}
The assertion \eqref{claim_glob} follows then.

For the assertion \eqref{claim_ConvergenceEquation}, we first observe that for each \(n \in \N\), 
\[
 \inf_{B_1} \,\bigl(V_{\lambda_n}^- \ast \abs{w_n}^p\bigr)
 \le \frac{\displaystyle \int_{\R^N} (V_{\lambda_n}^- \ast \abs{w_n}^p)|w_n|^p}
 {\displaystyle \int_{B_1} \abs{w_n}^p} \le \C,
\]
in view of \eqref{eq_dpxsar}.
Therefore, there exists a sequence of points \((x_n)_{n \in \N}\) in \(B_1\) such that 
\[
 \limsup_{n \to \infty} \,\bigl(V_{\lambda_n}^- \ast \abs{w_n}^p\bigr) (x_n) < + \infty.
\]
By assumption \eqref{V2} and by \cref{sr}, we have in view of the boundedness of the sequence \((w_n)_{n \in \N}\) in \(L^p (\R^N)\),
\[
\begin{split}
    0 
  \le 
    \bigl(V_{\lambda_n}^- \ast \abs{w_n}^p\bigr) (0)
  \le 
      \bigl(V_{\lambda_n}^- \ast \abs{w_n}^p\bigr) (x_n)
    + 
      \int_{\R^N} 
        \bigabs{V (-y) - V (x_n - y)} \abs{w_n (y)}^p 
      \dif y
 \le 
    \C.
\end{split}
\]
This implies by Fatou's lemma that 
\[
    0 
  \le 
    \bigl(V \ast \abs{w}^p\bigr) (0)
  \le 
    \liminf_{n \to \infty} 
      \bigl(V_{\lambda_n}^- \ast \abs{w_n}^p\bigr) (0) 
  < 
    + \infty.
\]
The sequence \(((V_{\lambda_n}^- \ast \abs{w_n}^p) (0))_{n \in \N}\) is bounded and, up to the extraction of a subsequence, we can assume that the sequence of real numbers $\bigl( (V_{\lambda_n}^- \ast \abs{w_n}^p) (0) - (V^- \ast \abs{w}^p) (0)\bigr)_{n \in \N}$ converges to some \(\mu \in [0, + \infty)\). 
Moreover by Lebesgue's monotone convergence theorem, we have,
\begin{equation}
\label{eq_vdid_idt}
 \bigl(V \ast \abs{w}^p\bigr) (0)
= \lim_{n \to \infty }\bigl(V_{\lambda_n}^- \ast \abs{w}^p\bigr) (0).
\end{equation}

Next, we observe that for every \(x \in \R^N\) and \(n \in \N\), by assumption \eqref{V2} and by \cref{sr}
\begin{multline*}
% \bigabs{(V_{\lambda_n}^-*|w_n|^p)(x)&-(V^-*\abs{w}^p)(x)-\mu}\\
\bigabs{(V_{\lambda_n}^-*|w_n|^p)(x)-(V_{\lambda_n}^-*\abs{w}^p)(x)-(V_{\lambda_n}^-*|w_n|^p)(0)+(V_{\lambda_n}^-*\abs{w}^p)(0)}\\
 \le\int_{\R^N}\bigabs{ V_{\lambda_n}^- (x-y) - V_{\lambda_n}^-(-y)} \;
\bigabs{ \abs{w_n(y)}^p-\abs{w(y)}^p}\dif y\\
\le \C (\abs{x} + 1) \int_{\R^N} \bigabs{|w_n|^p-\abs{w}^p},
\end{multline*}
and thus by \eqref{claim_glob} and \eqref{eq_vdid_idt}, we have 
\[
  \lim_{n \to \infty} \sup_{x \in \R^N} \frac{\bigabs{(V_{\lambda_n}^-*|w_n|^p)(x) - (V^- \ast \abs{w}^p)(x) - \mu}}{1 + \abs{x}} = 0,
\]
and the assertion \eqref{claim_ConvergenceEquation} follows.
\end{proofclaim}

\begin{claim}
\label{claimConvergenceVariationnal}
We have 
\begin{align*}
 \int_{\R^N} \bigl(\abs{\nabla w}^2 + \abs{w}^2\bigr) & = 1 &
 &\text{ and }&
  \int_{\R^N}\bigl(V*\abs{w}^p\bigr)\,\abs{w}^p=a = \lim_{\lambda \to \infty} a_\lambda
\end{align*}
and the sequence \((w_n)_{n \in \N}\) converges strongly to \(w\) in \(H^1 (\R^N)\).
\end{claim}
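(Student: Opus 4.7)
The plan is to establish a sharp two-sided bound on $\int_{\R^N}(V\ast\abs{w}^p)\abs{w}^p$ by combining weak lower-semicontinuity of the $H^1$-norm with the Brezis--Lieb splitting of \cref{lemmaBrezisLieb} applied at each finite truncation level $\lambda_k$, and then to squeeze the resulting inequalities using the homogeneity exponent $p$. First, by weak $H^1$-lower semicontinuity applied to the sequence $(w_n)_{n\in\N}$, one has $c^2:=\int_{\R^N}(\abs{\nabla w}^2+\abs{w}^2)\in(0,1]$, positivity coming from Claim~\ref{claimCompactness}\eqref{claimNontrivial}; the Hilbert-space orthogonal splitting then yields $\int_{\R^N}(\abs{\nabla(w_n-w)}^2+\abs{w_n-w}^2)\to\delta^2:=1-c^2$.

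For the key lower bound, fix $k\in\N$. Since $V_{\lambda_n}\le V_{\lambda_k}$ pointwise for $n\ge k$ and $\abs{w_n}^p\ge 0$, one has $a_{\lambda_n}\le\int_{\R^N}(V_{\lambda_k}\ast\abs{w_n}^p)\abs{w_n}^p$. I would then apply \cref{lemmaBrezisLieb} at level $\lambda_k$ to $(w_n)_{n\in\N}$, whose boundedness in $L^p(\R^N)\cap L^{2pq/(2q-1)}(\R^N)$ comes from the $H^1$-bound and the Sobolev embedding allowed by \eqref{V1}, noting that its coupling term vanishes:
\[
  \int_{\R^N}(V_{\lambda_k}^-\ast\abs{w}^p)\abs{w_n-w}^p\to 0\quad\text{as }n\to\infty,
\]
because $V_{\lambda_k}^-\le\lambda_k$ forces $V_{\lambda_k}^-\ast\abs{w}^p\in L^\infty(\R^N)$, while $\abs{w_n-w}^p\to 0$ in $L^1(\R^N)$ by Claim~\ref{claimCompactness}\eqref{claim_glob}. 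Combining the resulting Brezis--Lieb splitting with the homogeneous bound $\int_{\R^N}(V_{\lambda_k}\ast\abs{w_n-w}^p)\abs{w_n-w}^p\le a_{\lambda_k}(\int_{\R^N}(\abs{\nabla(w_n-w)}^2+\abs{w_n-w}^2))^p$ from \eqref{eqDef_a_lambda}, and letting $n\to\infty$, I would obtain
\[
  a_\infty:=\lim_na_{\lambda_n}\le a_{\lambda_k}\delta^{2p}+\int_{\R^N}(V_{\lambda_k}\ast\abs{w}^p)\abs{w}^p.
\]

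Next I let $k\to\infty$. Claim~\ref{claim_a_lambda} gives $a_{\lambda_k}\to a_\infty$; since $V_{\lambda_k}^+\equiv V^+$ and $V_{\lambda_k}^-\uparrow V^-$ pointwise, monotone convergence produces $\int_{\R^N}(V_{\lambda_k}\ast\abs{w}^p)\abs{w}^p\downarrow\int_{\R^N}(V\ast\abs{w}^p)\abs{w}^p\in[-\infty,+\infty)$, yielding
\[
  \int_{\R^N}(V\ast\abs{w}^p)\abs{w}^p\ge a_\infty(1-\delta^{2p})\ge 0,
\]
which is in particular finite. For the matching upper bound, the function $w/c$ is admissible in \eqref{a} and $a\le a_\infty$, so $\int_{\R^N}(V\ast\abs{w}^p)\abs{w}^p\le ac^{2p}\le a_\infty c^{2p}$. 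Comparing the two bounds forces $c^{2p}+\delta^{2p}\ge 1$; but $c^2+\delta^2=1$ with $c,\delta\in[0,1]$ and $p\ge 2$ gives $c^{2p}+\delta^{2p}\le c^2+\delta^2=1$, with strict inequality unless $\{c^2,\delta^2\}=\{0,1\}$. Since $c>0$, I conclude $c=1$, $\delta=0$, hence $\int_{\R^N}(\abs{\nabla w}^2+\abs{w}^2)=1$; the convergence of the norms together with the weak convergence yields strong convergence $w_n\to w$ in $H^1(\R^N)$, and $\int_{\R^N}(V\ast\abs{w}^p)\abs{w}^p=a=a_\infty$.

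The main obstacle is that \cref{lemmaBrezisLieb} is only stated at a fixed level $\lambda$, while our sequence involves the $n$-dependent truncations $V_{\lambda_n}$ whose negative parts grow with $n$; a direct application of the lemma with varying $\lambda_n$ would leave an uncontrolled remainder. The dominance $V_{\lambda_n}\le V_{\lambda_k}$ for $n\ge k$, followed by monotone convergence as $k\to\infty$, is the device that decouples the two limits and produces a clean lower bound on $\int_{\R^N}(V\ast\abs{w}^p)\abs{w}^p$.
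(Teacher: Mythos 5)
Your argument is correct, but it takes a genuinely different route from the paper's. The paper proceeds by a direct squeeze: the positive\nobreakdash-part term \(\int_{\R^N}(V^+\ast\abs{w_n}^p)\abs{w_n}^p\) converges to \(\int_{\R^N}(V^+\ast\abs{w}^p)\abs{w}^p\) because the global strong \(L^p\) convergence of \eqref{claim_glob}, interpolated with the Sobolev bound, upgrades to strong convergence in \(L^{2pq/(2q-1)}(\R^N)\) where Young's inequality applies; the negative\nobreakdash-part term, with its \(n\)-dependent truncation levels, is handled in one stroke by Fatou's lemma on \(\R^N\times\R^N\), giving \(\liminf_n\int_{\R^N}(V_{\lambda_n}^-\ast\abs{w_n}^p)\abs{w_n}^p\ge\int_{\R^N}(V^-\ast\abs{w}^p)\abs{w}^p\); combining yields \(\int_{\R^N}(V\ast\abs{w}^p)\abs{w}^p\ge\lim_n a_{\lambda_n}\ge a\), while the reverse bound \(\int_{\R^N}(V\ast\abs{w}^p)\abs{w}^p\le a\bigl(\int_{\R^N}(\abs{\nabla w}^2+\abs{w}^2)\bigr)^p\le a\) from weak lower semicontinuity forces all inequalities to be equalities at once. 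You instead run \cref{lemmaBrezisLieb} at a frozen level \(\lambda_k\), compare with \(a_{\lambda_n}\) via the dominance \(V_{\lambda_n}\le V_{\lambda_k}\), decouple the two limits by monotone convergence in \(k\), and extract the conclusion from the concavity inequality \(c^{2p}+\delta^{2p}\le c^2+\delta^2=1\) with its equality cases --- the same mechanism as in the proof of \cref{lemmaRelaxed}. Both proofs hinge on the compactness input \eqref{claim_glob} (you use it to annihilate the coupling term of the Brezis--Lieb identity, the paper to get strong convergence of the positive part), and both are valid; yours is somewhat longer but avoids the Fatou-on-the-product-space step and the strong \(L^{2pq/(2q-1)}\) convergence of the \(V^+\) term. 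One cosmetic point: since \((\lambda_n)_{n\in\N}\) is only assumed to tend to \(+\infty\) and need not be monotone, for fixed \(k\) you should invoke \(V_{\lambda_n}\le V_{\lambda_k}\) ``for all \(n\) large enough that \(\lambda_n\ge\lambda_k\)'' rather than ``for \(n\ge k\)''; this does not affect the limit as \(n\to\infty\).
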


\begin{proofclaim}
We first observe that, by the definitions of \(a\) in \eqref{a} and of \(a_\lambda\) in \eqref{eqDef_a_lambda}, in view of the fact that \(V_\lambda \ge V\) by definition in \eqref{eqDef_relaxed_potential}, we have 
\(a_\lambda \ge a\).

By \eqref{claim_glob} in \cref{claimCompactness}, the Sobolev embedding and the H\"older inequality, the sequence \((w_n)_{n \in \N}\) converges strongly to \(w\) in \(L^{\frac{2 pq}{2 q - 1}} (\R^N)\) and thus by Young's convolution inequality (\cref{young})
\[
  \lim_{n \to \infty}
  \int_{\R^N}\bigl(V^+ * \abs{w_n}^p\bigr) \,\abs{w_n}^p
  = \int_{\R^N}\bigl(V^+*\abs{w}^p\bigr)\,\abs{w}^p.
\]
In view of \eqref{claim_a_e} in \cref{claimCompactness}, the sequence \((\abs{w_n}^p)_{n \in \N}\) converges to \(\abs{w}^p\) almost everywhere in \(\R^N\). Since \(V_\lambda^- \to V^-\) everywhere in \(\R^N\) as \(\lambda \to \infty\), we deduce by an application of Fatou's lemma on \(\R^N \times \R^N\) that 
\[
 \liminf_{n \to \infty} \int_{\R^N}\bigl(V_\lambda^- * \abs{w_n}^p\bigr)\, \abs{w_n}^p 
 \ge \int_{\R^N} \bigl(V^- * \abs{w}^p\bigr)\, \abs{w}^p.
\]
Hence we have 
\begin{equation}
\label{eqConvergence_one}
  \int_{\R^N} \bigl(V * \abs{w}^p\bigr) \abs{w}^p
  \ge \limsup_{n \to \infty} \int_{\R^N}\bigl(V * \abs{w_n}^p\bigr) \abs{w_n}^p 
  = \lim_{n \to \infty} a_{\lambda_n} = a.
\end{equation}
On the other hand we have, by definition of \(a\) in \eqref{a},
\begin{equation}
\label{eqConvergence_two}
 \int_{\R^N} \bigl(V * \abs{w}^p\bigr) \abs{w}^p
 \le a \biggl(\int_{\R^N}\bigl( \abs{\nabla w}^2 + \abs{w}^2\bigr)\biggr)^p
 \le a \liminf_{n \to \infty} \int_{\R^N}\bigl( \abs{\nabla w_n}^2 + \abs{w_n}^2\bigr)
 = a.
\end{equation}
The claim then follows from \eqref{eqConvergence_one} and \eqref{eqConvergence_two}.
\end{proofclaim}

\begin{claim}
\label{claimEquation} We have
\(V \ast \abs{w}^p \in L^s_{\mathrm{loc}} (\R^N)\) for every \(s \in [1, + \infty)\) such that \(\frac{1}{s} \ge p (\frac{1}{2} - \frac{1}{N}) + \frac{1}{q} - 1\) and for every \(\varphi \in C^1_c (\R^N)\),
\[
  a \int_{\R^N} \nabla w \cdot \nabla \varphi + w\, \varphi
  = \int_{\R^N} \bigl(V \ast \abs{w}^p\bigr) \abs{w}^{p - 2} w\, \varphi.
\]
\end{claim}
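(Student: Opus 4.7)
The plan is to derive the Euler--Lagrange equation satisfied by each relaxed maximizer \(w_n = w_{\lambda_n}(\cdot + x_n)\) and then to pass to the limit \(n \to \infty\). Since \(w_n\) maximizes the functional \(\mathcal{J}_{\lambda_n}\) under the constraint \(\int_{\R^N} (\abs{\nabla u}^2 + u^2) = 1\), the Lagrange multiplier theorem, combined with the homogeneity obtained by testing against \(w_n\) itself (which uses the evenness of \(V_{\lambda_n}\)), identifies the multiplier as \(a_{\lambda_n}\) and produces
\[
  a_{\lambda_n} \int_{\R^N} \nabla w_n \cdot \nabla \varphi + w_n\,\varphi
  = \int_{\R^N} \bigl(V_{\lambda_n} \ast \abs{w_n}^p\bigr) \abs{w_n}^{p-2} w_n\, \varphi
\]
for every \(\varphi \in C^1_c(\R^N)\). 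The left-hand side converges to \(a \int_{\R^N} (\nabla w \cdot \nabla \varphi + w\, \varphi)\) since \(a_{\lambda_n} \to a\) and \(w_n \to w\) strongly in \(H^1(\R^N)\) by \cref{claimConvergenceVariationnal}.

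\textbf{Passing to the limit on the right.} The strategy is to split \(V_{\lambda_n} = V^+ - V_{\lambda_n}^-\). For the positive part, the strong convergence of \(\abs{w_n}^p\) to \(\abs{w}^p\) in \(L^{2q/(2q-1)}(\R^N)\) (coming from strong \(H^1\) convergence and the Sobolev embedding permitted by \eqref{V1}), combined with Young's inequality (\cref{young}), yields \(V^+ \ast \abs{w_n}^p \to V^+ \ast \abs{w}^p\) in \(L^{2q}(\R^N)\); together with local convergence of \(\abs{w_n}^{p-2}w_n\) to \(\abs{w}^{p-2}w\), the positive-part integral on the compact support of \(\varphi\) passes to the limit. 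For the negative part, the key tool is assertion \eqref{claim_ConvergenceEquation} of \cref{claimCompactness}, which provides \(V_{\lambda_n}^- \ast \abs{w_n}^p \to (V^- \ast \abs{w}^p) + \mu\) in \(L^\infty(B_R)\) for every \(R > 0\); this gives convergence of the negative-part integral to \(\int_{\R^N} [(V^- \ast \abs{w}^p) + \mu]\,\abs{w}^{p-2}w\,\varphi\).

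\textbf{Main obstacle: ruling out the constant \(\mu\).} The resulting identity is the desired equation disfigured by the spurious term \(-\mu \int_{\R^N} \abs{w}^{p-2}w\,\varphi\), so the heart of the proof is to show that \(\mu \in [0, +\infty)\) actually vanishes. I would extract this from the \emph{global} energy information in \cref{claimConvergenceVariationnal}: subtracting the convergence \(\int_{\R^N}(V^+ \ast \abs{w_n}^p)\abs{w_n}^p \to \int_{\R^N}(V^+ \ast \abs{w}^p)\abs{w}^p\) from \(\int_{\R^N}(V_{\lambda_n} \ast \abs{w_n}^p)\abs{w_n}^p \to a = \int_{\R^N}(V \ast \abs{w}^p)\abs{w}^p\) gives
\[
  \lim_{n \to \infty}\int_{\R^N}\bigl(V_{\lambda_n}^- \ast \abs{w_n}^p\bigr)\abs{w_n}^p = \int_{\R^N}\bigl(V^- \ast \abs{w}^p\bigr)\abs{w}^p.
\]
On the other hand, assertion \eqref{claim_ConvergenceEquation} of \cref{claimCompactness} together with strong \(L^p_{\mathrm{loc}}\)-convergence of \(w_n\) yields, for every \(R>0\),
\[
  \int_{B_R} \bigl[\bigl(V^- \ast \abs{w}^p\bigr) + \mu\bigr]\abs{w}^p
  = \lim_{n \to \infty}\int_{B_R}\bigl(V_{\lambda_n}^- \ast \abs{w_n}^p\bigr)\abs{w_n}^p
  \le \int_{\R^N}\bigl(V^- \ast \abs{w}^p\bigr)\abs{w}^p.
\]
Letting \(R \to \infty\) by monotone convergence forces \(\mu \int_{\R^N}\abs{w}^p \le 0\); since \(w \not\equiv 0\) by assertion \eqref{claimNontrivial} of \cref{claimCompactness}, this forces \(\mu = 0\).

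\textbf{Local integrability.} The \(L^s_{\mathrm{loc}}\)-bound for \(V \ast \abs{w}^p\) then follows as a byproduct: Young's inequality (\cref{young}) gives \(V^+ \ast \abs{w}^p \in L^s(\R^N)\) globally for the admissible range of \(s\), by choosing \(r\) with \(\tfrac{1}{s} = \tfrac{1}{q} + \tfrac{1}{r} - 1\) and \(\tfrac{1}{r} \ge p(\tfrac{1}{2} - \tfrac{1}{N})\) so that \(\abs{w}^p \in L^r(\R^N)\) by the Sobolev embedding; while the identity \(V^- \ast \abs{w}^p = \lim_n V_{\lambda_n}^- \ast \abs{w_n}^p\) in \(L^\infty_{\mathrm{loc}}(\R^N)\), now that \(\mu = 0\), places \(V^- \ast \abs{w}^p\) in \(L^\infty_{\mathrm{loc}}(\R^N)\). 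The main difficulty throughout is the interplay between the purely local control on \(V^-\) given by \cref{claimCompactness} and the global \(L^q\)-control on \(V^+\); the \(\mu = 0\) argument is precisely the point where these two scales of information are reconciled.
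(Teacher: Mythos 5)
Your proposal is correct, and for the decisive step it takes a genuinely different route from the paper. The derivation of the relaxed Euler--Lagrange identity, the splitting \(V_{\lambda_n}=V^+-V_{\lambda_n}^-\), the limit passage via Young's inequality for the positive part and via assertion \eqref{claim_ConvergenceEquation} of \cref{claimCompactness} for the negative part, and the \(L^s_{\mathrm{loc}}\) integrability all match the paper. The difference is in how \(\mu=0\) is obtained. The paper tests the limit equation (still carrying the spurious \(\mu\)-term) against \(w\) itself; since \(w\) is not compactly supported, this requires first extending the weak formulation to compactly supported \(H^1\) test functions by density and then running a cutoff argument (``a variant of \cref{lemmaTest}''), after which \cref{claimConvergenceVariationnal} gives \(\int_{\R^N}(V\ast\abs{w}^p)\abs{w}^p=a=a\int_{\R^N}(\abs{\nabla w}^2+\abs{w}^2)\) and the \(\mu\)-term must vanish. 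You instead never test against \(w\): you subtract the convergence of the positive interaction energy from \(\lim_n a_{\lambda_n}=a=\int_{\R^N}(V\ast\abs{w}^p)\abs{w}^p\) to identify \(\lim_n\int_{\R^N}(V_{\lambda_n}^-\ast\abs{w_n}^p)\abs{w_n}^p=\int_{\R^N}(V^-\ast\abs{w}^p)\abs{w}^p\), compare this global limit with the local limit \(\int_{B_R}[(V^-\ast\abs{w}^p)+\mu]\abs{w}^p\) furnished by the uniform convergence on balls, and exploit nonnegativity plus monotone convergence (together with \(w\not\equiv0\) and the finiteness of \(\int_{\R^N}(V^-\ast\abs{w}^p)\abs{w}^p\), which your subtraction step also delivers) to force \(\mu\le 0\), hence \(\mu=0\). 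Your route buys the elimination of the cutoff/density step entirely, staying within the variational information already collected in \cref{claimCompactness,claimConvergenceVariationnal}; the paper's route buys a statement (the validity of the weak formulation for compactly supported \(H^1\) test functions) that is in the same spirit as what is used elsewhere, but which is not needed for the claim as stated. Both arguments are sound.
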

\begin{proofclaim}
First, for every \(\varphi \in C^1_c (\R^N)\),
\begin{equation}
\label{eqWeakRelaxedLagrange}
 a_{\lambda_n} \int_{\R^N} \nabla w_n \cdot \nabla \varphi + w_n \varphi
  = \int_{\R^N} \bigl(V_{\lambda_n} \ast \abs{w_n}^p\bigr) \abs{w_n}^{p - 2} w_n \,\varphi
\end{equation}
By \cref{claimCompactness} and Sobolev's embedding theorem, the sequence \((w_n)_{n \in \N}\) converges strongly to \(w\) in \(L^r (\R^N)\) if \(\frac{1}{r} \ge \frac{1}{2} - \frac{1}{N}\). By Young's convolution inequality (\cref{young}), the sequence \((V^+ \ast \abs{w_n}^p)_{n \in \N}\) converges strongly to \((V^+ \ast \abs{w}^p)_{n \in \N}\) in \(L^s_{\mathrm{loc}} (\R^N)\) whenever \(\frac{1}{s} \ge p (\frac{1}{2} - \frac{1}{N}) + \frac{1}{q} - 1\).
On the other hand, since for each \(n \in \N\), the function \(V_{\lambda_n}^-\) is bounded and \(w_n \in L^p (\R^N)\), the function  \(V_{\lambda_n}^- \ast \abs{w_n}^p\) is also bounded and we deduce from \cref{claimCompactness} that the sequence \((V_{\lambda_n}^- \ast \abs{w_n}^p)_{n \in \N}\) converges to \(\mu + V^- \ast \abs{w}^p\) in \(L^\infty_{\mathrm{loc}} (\R^N)\).
Therefore, by letting \(n \to \infty\) in \eqref{eqWeakRelaxedLagrange}, we obtain
\begin{equation}
\label{eqWeakChoquardLagrange}
 a \int_{\R^N} \nabla w \cdot \nabla \varphi + w \,\varphi
  = \int_{\R^N} \bigl(V \ast \abs{w}^p\bigr) \abs{w}^{p - 2} w\, \varphi
  + \mu \int_{\R^N} \abs{w}^{p - 2} w \,\varphi.
\end{equation}
Since \(V \ast \abs{w}^p \in L^s (\R^N)\) whenever \(\frac{1}{s} \ge p (\frac{1}{2} - \frac{1}{N}) + \frac{1}{q} - 1\), the identity \eqref{eqWeakChoquardLagrange} still holds when \(\varphi \in H^1 (\R^N)\) has compact support by the density of smooth functions in the Sobolev spaces.
We conclude the proof of the claim by proving that \(\mu = 0\). 

By a variant of \cref{lemmaTest} and by \cref{claimConvergenceVariationnal}, we have
\[
\begin{split}
 \int_{\R^N} \bigl(V \ast \abs{w}^p\bigr)\, \abs{w}^p  &= a \int_{\R^N} \bigl(\abs{\nabla w}^2  + \abs{w}^2\bigr)\\
 &=\int_{\R^N} \bigl(V \ast \abs{w}^p\bigr)\, \abs{w}^p - \mu \int_{\R^N} \abs{w}^p,
\end{split}
\]
which can only hold if \(\mu = 0\).
\end{proofclaim}

We conclude now the proof of \cref{theoremMainGroundstate}, by setting \(u = a^\frac{-1}{2p - 2} w\). By \cref{claimEquation}, the function $u$ satisfies the Choquard equation \eqref{equationChoquard}.
Let us show that $u$ is a groundstate.
By a direct computations, its energy value is
\[
\mathcal{I} (u)
=\frac{a_\lambda^{-\frac{1}{p-1}}}2\int_{\R^N}\bigl(\abs{\nabla w}^2+\abs{w}^2\bigr)-\frac{a_\lambda^{-\frac{p}{p-1}}}{2p}\int_{\R^N}(V_\lambda *\abs{w}^p)\abs{w}^p
=\Bigl(\frac{1}{2}-\frac{1}{2p}\Bigr) a_\lambda^{-\frac{1}{p-1}}.
\]
If \(v \in H^1 (\R^N)\) is a weak solution to the Choquard equation \eqref{equationChoquard}, then we have by \cref{lemmaTest}
\[
    a_\lambda 
  \ge 
    \frac
      {\displaystyle 
        \int_{\R^N} 
          \bigl(V \ast \abs{v}^p\bigr)\, \abs{v}^p}
      {\biggl(\displaystyle
        \int_{\R^N}\bigl( \abs{\nabla v}^2 + \abs{v}^2 \bigr)\biggr)^p}
  =
    \bigl(\tfrac{2p}{p - 1} \,\mathcal{I} (v)\bigr)^{p - 1},
\]
and the conclusion follows.
\end{proof}

\begin{bibdiv}
\begin{biblist}
\bib{Ackermann2006}{article}{
   author={Ackermann, Nils},
   title={A nonlinear superposition principle and multibump solutions of
   periodic Schr\"odinger equations},
   journal={J. Funct. Anal.},
   volume={234},
   date={2006},
   number={2},
   pages={277--320},
   issn={0022-1236},
%    review={\MR{2216902}},
%    doi={10.1016/j.jfa.2005.11.010},
}

\bib{bv}{article}{
      author={Battaglia, Luca},
      author={Van~Schaftingen, Jean},
       title={Existence of groundstates for a class of nonlinear choquard
  equation in the plane},
        date={2017},
    journal={Adv. Nonlinear Stud.},
%       eprint={arXiv:1604.03294},
      volume={17}, 
      number={3},
      pages={581--594},
}

\bib{BellazziniFrankVisciglia}{article}{
 title={Maximizers for Gagliardo--Nirenberg inequalities
and related non-local problems},
 author={Bellazzini, Jacopo},
 author={Frank, Rupert L.},
 author={Visciglia, Nicola},
 journal={Math. Ann.},
%  doi={10.1007/s00208-014-1046-2},
 year={2014},
 volume={360},
 number={3--4},
 pages={653--673},
}
\bib{Bogachev2007}{book}{
      author={Bogachev, V.~I.},
       title={Measure theory. {V}ol. {I}, {II}},
   publisher={Springer-Verlag, Berlin},
        date={2007},
        ISBN={978-3-540-34513-8; 3-540-34513-2},
%          url={http://dx.doi.org/10.1007/978-3-540-34514-5},
}

\bib{Bongers1980}{article}{
   author={Bongers, A.},
   title={Existenzaussagen f\"ur die Choquard-Gleichung: ein nichtlineares
   Eigenwertproblem der Plasma-Physik},
%    language={German},
   journal={Z. Angew. Math. Mech.},
   volume={60},
   date={1980},
   number={7},
   pages={T240--T242},
%    issn={0044-2267},
%    review={\MR{623853 (84e:82023)}},
}

\bib{bcv}{article}{
      author={Bonheure, Denis},
      author={Cingolani, Silvia},
      author={Van~Schaftingen, Jean},
       title={The logarithmic choquard equation: sharp asymptotics and
  nondegeneracy of the groundstate},
        date={2017},
%      journal={preprint},
      journal={J. Funct. Anal.},
      volume={272}, 
      number={12}, 
      pages={5255--5281},
}

\bib{BrezisLieb1983}{article}{
      author={Brezis, Ha{\"{\i}}m},
      author={Lieb, Elliott},
       title={A relation between pointwise convergence of functions and
  convergence of functionals},
        date={1983},
        ISSN={0002-9939},
     journal={Proc. Amer. Math. Soc.},
      volume={88},
      number={3},
       pages={486\ndash 490},
         url={http://dx.doi.org/10.2307/2044999},
}

\bib{ChoquardStubbe2007}{article}{
   author={Choquard, Philippe},
   author={Stubbe, Joachim},
   title={The one-dimensional Schr\"odinger--Newton equations},
   journal={Lett. Math. Phys.},
   volume={81},
   date={2007},
   number={2},
   pages={177--184},
   issn={0377-9017},
}

\bib{ChoquardStubbeVuffray2008}{article}{
   author={Choquard, Philippe},
   author={Stubbe, Joachim},
   author={Vuffray, Marc},
   title={Stationary solutions of the Schr\"o\-ding\-er-Newton model --- an ODE
   approach},
   journal={Differential Integral Equations},
   volume={21},
   date={2008},
   number={7-8},
   pages={665--679},
   issn={0893--4983},
}

\bib{Cingolani_Weth}{article}{
      author={Cingolani, Silvia},
      author={Weth, Tobias},
       title={On the planar Schr\"odinger--Poisson system},
        date={2016},
        ISSN={0294-1449},
     journal={Ann. Inst. H. Poincar\'e Anal. Non Lin\'eaire},
      volume={33},
      number={1},
       pages={169\ndash 197},
         url={http://dx.doi.org/10.1016/j.anihpc.2014.09.008},
}

\bib{dw}{article}{
   author={Du, Miao},
   author={Weth, Tobias},
   title={Ground states and high energy solutions of the planar
   Schr\"odinger-Poisson system},
   journal={Nonlinearity},
   volume={30},
   date={2017},
   number={9},
   pages={3492--3515},
   issn={0951-7715},
}

\bib{lie}{article}{
      author={Lieb, Elliott~H.},
       title={Existence and uniqueness of the minimizing solution of
  {C}hoquard's nonlinear equation},
        date={1976/77},
     journal={Studies in Appl. Math.},
      volume={57},
      number={2},
       pages={93\ndash 105},
}
\bib{LiebLoss2001}{book}{
   author={Lieb, Elliott H.},
   author={Loss, Michael},
   title={Analysis},
   series={Graduate Studies in Mathematics},
   volume={14},
   edition={2},
   publisher={American Mathematical Society}, 
   address={Providence, R.I.},
   date={2001},
   pages={xxii+346},
   isbn={0-8218-2783-9},
%    review={\MR{1817225}},
%    doi={10.1090/gsm/014},
}
\bib{lio}{article}{
      author={Lions, P.-L.},
       title={The {C}hoquard equation and related questions},
        date={1980},
        ISSN={0362-546X},
     journal={Nonlinear Anal.},
      volume={4},
      number={6},
       pages={1063\ndash 1072},
         url={http://dx.doi.org/10.1016/0362-546X(80)90016-4},
}

\bib{Lions1982}{article}{
   author={Lions, P.-L.},
   title={Compactness and topological methods for some nonlinear variational
   problems of mathematical physics},
   conference={
      title={Nonlinear problems: present and future},
      address={Los Alamos, N.M.},
      date={1981},
   },
   book={
      series={North-Holland Math. Stud.},
      volume={61},
      publisher={North-Holland},
      address={Amsterdam--New York},
   },
   date={1982},
   pages={17--34},
%    review={\MR{675625}},
}

\bib{m80}{article}{
   author={Menzala, Gustavo Perla},
   title={On regular solutions of a nonlinear equation of Choquard's type},
   journal={Proc. Roy. Soc. Edinburgh Sect. A},
   volume={86},
   date={1980},
   number={3--4},
   pages={291--301},
   issn={0308-2105},
}

\bib{MercuriMorozVanSchaftingen}{article}{
   author={Mercuri, Carlo},
   author={Moroz, Vitaly},
   author={Van Schaftingen, Jean},
   title={Groundstates and radial solutions to nonlinear
   Schr\"odinger-Poisson-Slater equations at the critical frequency},
   journal={Calc. Var. Partial Differential Equations},
   volume={55},
   date={2016},
   number={6},
   pages={Paper No. 146, 58 p.},
   issn={0944-2669},
%    review={\MR{3568051}},
%    doi={10.1007/s00526-016-1079-3},
}
                
\bib{mpt}{article}{
      author={Moroz, Irene~M.},
      author={Penrose, Roger},
      author={Tod, Paul},
       title={Spherically-symmetric solutions of the {S}chr\"odinger-{N}ewton
  equations},
        date={1998},
        ISSN={0264-9381},
     journal={Classical Quantum Gravity},
      volume={15},
      number={9},
       pages={2733\ndash 2742},
         url={http://dx.doi.org/10.1088/0264-9381/15/9/019},
        note={Topology of the Universe Conference (Cleveland, Oh., 1997)},
}

\bib{mv13}{article}{
      author={Moroz, Vitaly},
      author={Van~Schaftingen, Jean},
       title={Groundstates of nonlinear {C}hoquard equations: existence,
  qualitative properties and decay asymptotics},
        date={2013},
        ISSN={0022-1236},
     journal={J. Funct. Anal.},
      volume={265},
      number={2},
       pages={153\ndash 184},
         url={http://dx.doi.org/10.1016/j.jfa.2013.04.007},
}

\bib{mv15}{article}{
      author={Moroz, Vitaly},
      author={Van~Schaftingen, Jean},
       title={Existence of groundstates for a class of nonlinear {C}hoquard
  equations},
        date={2015},
        ISSN={0002-9947},
     journal={Trans. Amer. Math. Soc.},
      volume={367},
      number={9},
       pages={6557\ndash 6579},
         url={http://dx.doi.org/10.1090/S0002-9947-2014-06289-2},
}

\bib{mv17}{article}{
      author={Moroz, Vitaly},
      author={Van~Schaftingen, Jean},
       title={A guide to the {C}hoquard equation},
        date={2017},
        ISSN={1661-7738},
     journal={J. Fixed Point Theory Appl.},
      volume={19},
      number={1},
       pages={773\ndash 813},
         url={http://dx.doi.org/10.1007/s11784-016-0373-1},
}

\bib{pek}{book}{
      author={Pekar, S.I.},
       title={{Untersuchungen \"uber die Elektronentheorie der Kristalle}},
   publisher={Akademie Verlag, Berlin},
         note={{Berlin: Akademie- Verlag. VIII, 184 S. (1954).}},
        date={1954},
}

\bib{Ricaud2016}{article}{
   author={Ricaud, Julien},
   title={On uniqueness and non-degeneracy of anisotropic polarons},
   journal={Nonlinearity},
   volume={29},
   date={2016},
   number={5},
   pages={1507--1536},
   issn={0951-7715},
%    review={\MR{3481341}},
%    doi={10.1088/0951-7715/29/5/1507},
}

\bib{Roe2003}{book}{
   author={Roe, John},
   title={Lectures on coarse geometry},
   series={University Lecture Series},
   volume={31},
   publisher={American Mathematical Society, Providence, R.I.},
   date={2003},
   pages={viii+175},
   isbn={0-8218-3332-4},
%    review={\MR{2007488}},
%    doi={10.1090/ulect/031},
}

\bib{Stuart1980}{article}{
   author={Stuart, C. A.},
   title={Bifurcation for variational problems when the linearisation has no
   eigenvalues},
   journal={J. Funct. Anal.},
   volume={38},
   date={1980},
   number={2},
   pages={169--187},
   issn={0022-1236},
%    review={\MR{587907 (82c:58015)}},
%    doi={10.1016/0022-1236(80)90063--4},
}

\bib{Stubbe2008}{article}{
      author={Stubbe, Joachim},
       title={Bound states of two-dimensional Schr\"odinger--Newton
  equations},
        date={2008},
%      journal={preprint},
      eprint={arXiv:0807.4059},
}

\bib{StubbeVuffray2010}{article}{
   author={Stubbe, Joachim},
   author={Vuffray, Marc},
   title={Bound states of the Schr\"odinger--Newton model in low dimensions},
   journal={Nonlinear Anal.},
   volume={73},
   date={2010},
   number={10},
   pages={3171--3178},
   issn={0362-546X},
}

\bib{TodMoroz1999}{article}{
   author={Tod, K. Paul},
   author={Moroz, Irene M.},
   title={An analytical approach to the Schr\"odinger--Newton equations},
   journal={Nonlinearity},
   volume={12},
   date={1999},
   number={2},
   pages={201--216},
   issn={0951-7715},
}

\bib{VanSchaftingen2014}{article}{
   author={Van Schaftingen, Jean},
   title={Interpolation inequalities between Sobolev and Morrey--Campanato
   spaces: A common gateway to concentration-compactness and
   Gagliardo--Nirenberg interpolation inequalities},
   journal={Port. Math.},
   volume={71},
   date={2014},
   number={3-4},
   pages={159--175},
   issn={0032-5155},
   %review={\MR{3298459}},
%    %doi={10.4171/PM/1947},
}
\bib{Willem1996}{book}{
   author={Willem, Michel},
   title={Minimax theorems},
   series={Progress in Nonlinear Differential Equations and their
   Applications, 24},
   publisher={Birkh\"auser}, 
   address={Boston, Mass.},
   date={1996},
   pages={x+162},
   isbn={0-8176-3913-6},
   %doi={10.1007/978-1-4612-4146-1},
}

\bib{Willem2013}{book}{
  author = {Willem, Michel},
  title = {Functional analysis},
  subtitle = {Fundamentals and Applications},
  series={Cornerstones},
  publisher = {Birkh\"auser},
  place = {Basel},
  volume = {XIV},
  pages = {213},
  date={2013},
}

\bib{YangWei2013}{article}{
   author={Yang, Minbo},
   author={Wei, Yuanhong},
   title={Existence and multiplicity of solutions for nonlinear
   Schr\"odinger equations with magnetic field and Hartree type
   nonlinearities},
   journal={J. Math. Anal. Appl.},
   volume={403},
   date={2013},
   number={2},
   pages={680--694},
   issn={0022-247X},
%    review={\MR{3037498}},
%    doi={10.1016/j.jmaa.2013.02.062},
}

\end{biblist}
\end{bibdiv}

\end{document}